\definecolor{myred}{rgb}{0.75,0,0}
\definecolor{mygreen}{rgb}{0,0.5,0}
\definecolor{myblue}{rgb}{0,0,0.65}
\theoremstyle{plain}
\newtheorem{theorem}{Theorem}[section]
\newtheorem{proposition}[theorem]{Proposition}
\newtheorem{lemma}[theorem]{Lemma}
\newtheorem{corollary}[theorem]{Corollary}
\theoremstyle{definition}
\newtheorem{remark}[theorem]{Remark}
\theoremstyle{remark}
\numberwithin{equation}{section}
\newcommand\nc{\newcommand}
\nc\on{\operatorname}
\nc\renc{\renewcommand}
\newcommand\bp{{\mathbb P}}
\newcommand\sce{\mathscr E}
\newcommand\sch{\mathscr H}
\newcommand\sco{\mathscr O}
\newcommand\scv{\mathscr V}
\newcommand\scx{\mathscr X}
\newcommand\scy{\mathscr Y}
\newcommand\scz{\mathscr Z}
\newcommand \ra{\rightarrow}
\newcommand \minhilb[2]{\sch^{\text{scroll}}_{#1, #2}}
\newcommand \hilb[1]{\sch_{#1}}
\newcommand \uhilb[1]{\scv_{#1}}
\newcommand \scroll[1]{\text{S}_{#1}}
\newcommand \minhilbsmooth[2]{\sch^{\text{scroll}}_{#1, #2, \text{sm}}}
\newcommand \broken[2]{\sch^{broken}_{#1, #2}}
\newcommand \gbundle [2]{\mathbb G (#1, #2)}
\newcommand{\customlabel}[2]{%
   \protected@write \@auxout {}{\string \newlabel {#1}{{#2}{\thepage}{#2}{#1}{}} }%
   \hypertarget{#1}{#2}
}
\DeclareMathOperator\spn{Span}
\newcommand\bk{{\Bbbk}}
\def\listtodoname{List of Todos}
\def\listoftodos{\@starttoc{tdo}\listtodoname}
\title{Interpolation of Varieties of Minimal Degree}
\author{Aaron Landesman} \address{Dept. of Mathematics, Stanford University,
	Stanford, CA 94305-2125} 
\email{aaronlandesman@stanford.edu}
\date{\today}
\begin{document}
\maketitle

\begin{abstract}
It is well known that one can find a rational normal curve in $\mathbb P^n$ through $n+3$ general points.  We prove a generalization of this to higher dimensional varieties, showing that smooth varieties of minimal degree can be interpolated through points and linear spaces.
\end{abstract}

\section{Introduction}

In this paper, we investigate the question of interpolation,
which asks whether
we can fit a given type of variety through a general collection
of points in projective space.
As one of the simplest examples of interpolation, if one chooses
$5$ points in $\bp^2$, one can always find a degree $2$ curve
passing through those five points, so degree 2 curves in $\bp^2$
satisfy interpolation.

Recall that a $k$ dimensional variety of degree $d$ in $\bp^n$ is of {\bf minimal degree}
if it is nondegenerate and $d = n + 1 - k$.
There is a simple classification of varieties of minimal degree
as detailed in \cite[Theorem 1]{eisenbudH:on-varieties-of-minimal-degree}:
If $X$ is a smooth irreducible nondegenerate variety of minimal degree in $\bp^n$,
	then $X$ is either a quadric hypersurface,
	a rational normal scroll, the image of
	$\bp^2$ under the 2-Veronese embedding
	$\nu_2(\bp^2) \rightarrow \bp^5$,
	or $\bp^n$ itself.
In this paper, we show that smooth varieties of minimal degree satisfy 
interpolation.

We next define more precisely what we mean by interpolation.
Suppose $U$ is an integral subscheme of the Hilbert scheme
parameterizing varieties of dimension $k$ in $\bp^n$.
Define $q$ and $r$ so that $\dim U = q \cdot (n- k) + r$.
We say $U$ {\bf satisfies interpolation} if for any collection
$p_1, \ldots, p_q, \Lambda$, where $p_i \in \bp^n$ are points
and $\Lambda \subset \bp^n$ is a plane of dimension
$n-k-r$, there exists some $[Y] \in U$ so that
$Y$ passes through $p_1, \ldots, p_q$ and meets $\Lambda$.
We say $U$ {\bf satisfies weak interpolation} if there exists
some $[Y] \in U$ meeting $q$ general points $p_1, \ldots, p_q$.
If $X$ is a projective variety lying on a unique
irreducible component of the Hilbert scheme, denoted
$\hilb X$, then we say $X$ satisfies interpolation if $\hilb X$ does.
Although this description of interpolation,
given in \cite[Theorem A.7(9)]{landesmanP:interpolation-problems:-del-pezzo-surfaces}, is the most classical one,
there are at least twenty two equivalent descriptions of interpolation
under moderate hypotheses, as described in
\cite[Theorem A.7]{landesmanP:interpolation-problems:-del-pezzo-surfaces}.

Interpolation has applications to Gromov-Witten theory,
the slope conjecture,
constructing degenerations,
and understanding the Hilbert scheme, as detailed in
\cite[Subsection 1.1]{landesmanP:interpolation-problems:-del-pezzo-surfaces}. Also see \cite[Section 1]{landesmanP:interpolation-problems:-del-pezzo-surfaces} for a more leisurely introduction to interpolation.
It is known that nearly all nonspecial curves satisfy interpolation
\cite{atanasovLY:interpolation-for-normal-bundles-of-general-curves},
and that canonical curves in genus other than $6$ and $4$ satisfy
weak interpolation as proven in
\cite[Chapter 13]{stevens:deformations-of-singularities}.
Most recently, it was shown that smooth del Pezzo surfaces satisfy
weak interpolation \cite[Theorem 1.1]{landesmanP:interpolation-problems:-del-pezzo-surfaces}. 

We now state our main result, which continues
the investigation of interpolation of higher dimensional varieties.
The theorem holds over an algebraically closed field of
arbitrary characteristic.

\begin{restatable}{theorem}{minimalInterpolation}
	\label{theorem:interpolation-minimal-surfaces}
	Smooth varieties of minimal degree satisfy interpolation.
\end{restatable}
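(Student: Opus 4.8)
The plan is to use the Eisenbud--Harris classification quoted above to reduce to the four families of smooth varieties of minimal degree, and to dispatch each in turn through a cohomological reformulation of interpolation. Concretely, I would first invoke one of the equivalent characterizations from \cite[Theorem A.7]{landesmanP:interpolation-problems:-del-pezzo-surfaces}: a smooth $X \subset \bp^n$ lying on a unique component of its Hilbert scheme satisfies interpolation if and only if, for a general choice of the prescribed points and linear space, the normal bundle $N_{X/\bp^n}$ twisted down by the ideal sheaf of that incidence data has vanishing $H^1$ (equivalently, the evaluation map out of $H^0(N_{X/\bp^n})$ has maximal rank). This converts the geometric statement into a sheaf-cohomology vanishing that I can attack family by family.

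The cases $X = \bp^n$ and $X$ a smooth quadric hypersurface are immediate. The former passes through everything. For quadrics the codimension $n-k$ equals $1$, so $r = 0$ and no linear-space condition appears; quadrics form the complete system $|\sco_{\bp^n}(2)|$, and the required maximal rank of the evaluation map at general points is the classical fact that general points impose independent conditions on quadrics. For the Veronese surface $\nu_2(\bp^2) \subset \bp^5$ I would compute $N_{\nu_2(\bp^2)/\bp^5}$ explicitly, using the Euler sequence on $\bp^2$ and the fact that the embedding is by $\sco_{\bp^2}(2)$, so that after twisting the relevant bundles become direct sums of line bundles on $\bp^2$ whose $H^1$ is read off directly. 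Here $\dim U = 27 = 9 \cdot 3$, so interpolation amounts to finding a Veronese surface through $9$ general points of $\bp^5$, which the cohomology computation supplies.

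The substantial case is that of rational normal scrolls $\scroll{a_1, \ldots, a_k} = \bp(\sco(a_1) \oplus \cdots \oplus \sco(a_k))$, and here I expect the main difficulty to lie. My approach is to induct on the degree $d = \sum a_i$ by degenerating a smooth scroll to a broken scroll, namely a transverse union of two scrolls of smaller degree meeting along a common fiber (a $(k-1)$-plane), the configurations parameterized by the broken-scroll loci $\broken{\cdot}{\cdot}$. The base case $k = 1$ is the classical statement that rational normal curves pass through $n+3$ general points. For the inductive step I would show that the normal bundle of a broken scroll has exactly the cohomology forced by its two pieces, via the Mayer--Vietoris / gluing sequence along the shared fiber, so that the requisite twisted $H^1$-vanishing holds on the broken scroll and hence, by upper semicontinuity, on the general smooth scroll in the same component $\minhilb{\cdot}{\cdot}$. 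A parallel and equally delicate point is to confirm that the broken scrolls smooth out and that the smooth scroll lies on a unique Hilbert-scheme component, so that ``interpolation of $X$'' is even well posed.

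Throughout, incorporating the linear-space condition—meeting a general $\Lambda$ of dimension $n-k-r$, which supplies the residual $r$ conditions beyond the $q$ points—is what makes the bookkeeping hard rather than the points alone: I would encode $\Lambda$ as an additional reduced subscheme imposed on $N_{X/\bp^n}$ and carry it through the same degenerations, checking that the combined twist still has vanishing $H^1$. The crux of the whole argument is therefore the normal-bundle cohomology of scrolls and broken scrolls: once the line-bundle summands on $\bp^1$ are identified and their degrees are shown to remain above the vanishing threshold after the combined point-and-plane twist, interpolation follows uniformly. I expect the genuinely hard work to be setting up the broken-scroll degeneration so that this cohomology is computable and the induction closes.
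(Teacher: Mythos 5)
There is a genuine gap, and it is decisive: your cohomological reformulation of interpolation---$H^1$-vanishing of the normal bundle twisted down by the incidence data, equivalently maximal rank of the evaluation map on $H^0(N_{X/\bp^n})$---is strictly stronger than interpolation in positive characteristic, and the theorem is asserted over an algebraically closed field of \emph{arbitrary} characteristic. The paper's own closing remark records (citing \cite[Corollary 7.2.9]{landesman:undergraduate-thesis}) that the normal bundle of the $2$-Veronese surface fails exactly this normal-bundle interpolation in characteristic $2$, even though the Veronese does satisfy interpolation in every characteristic via Coble's classical argument (\cite[Theorem 5.6]{landesmanP:interpolation-problems:-del-pezzo-surfaces}, which is what the paper invokes). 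So your Veronese step, as proposed, cannot succeed in characteristic $2$, and the same caveat hangs over your scroll argument, since semicontinuity from a broken scroll only transfers the cohomological condition, not interpolation itself. The paper's proof deliberately avoids this: normal bundles enter only through \autoref{proposition:smooth-minimal-degree-hilbert-scheme} (smoothness of the Hilbert scheme point and the dimension count), and interpolation for scrolls is established purely geometrically, via the isolated-point criterion \cite[Theorem A.7(5)]{landesmanP:interpolation-problems:-del-pezzo-surfaces}: specialize points into a hyperplane, build a broken scroll $X \cup Y$ through the data by induction, and show it is isolated in its fiber by dimension counts on incidence correspondences together with \autoref{lemma:coskun-degenerations-of-scrolls}.

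Second, your induction does not close as structured. Degenerating a degree-$d$, dimension-$k$ scroll to a broken scroll (a degree-$(d-1)$ scroll union a $k$-plane along a ruling $(k-1)$-plane) preserves the dimension $k$, so an induction on degree with $k$ fixed bottoms out at $d = k$, i.e.\ at the Segre variety $\scroll{1^k} \cong \bp^1 \times \bp^{k-1} \subset \bp^{2k-1}$---not at rational normal curves, which is the base case you propose. That Segre base case is where much of the paper's genuine work lies (\autoref{proposition:segre-plane} and \autoref{proposition:segre-lines-interpolation}: Schubert calculus, the uniqueness statement \autoref{lemma:unique-segre-containing-linear-spaces}, and a Stein-factorization argument to extract an isolated point from a fiber with positive-dimensional components), and your proposal omits it entirely. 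Relatedly, the interpolation numerics bifurcate: by \autoref{lemma:interpolation-numerics}, for $k \leq d \leq 2k-2$ one needs $d+2k+2$ points and a $2(d-k)$-plane rather than $d+2k+1$ points and a $(d-2k+1)$-plane, which forces the paper to run a separate auxiliary induction (the hypothesis that a degree-$(d-1)$ scroll contains a general $(2k-d-1)$-plane and $2d$ points, \autoref{lemma:middle-induction} and \autoref{corollary:mid-induction}); a uniform degree induction glosses over precisely the cases the paper identifies as the trickiest and previously unknown ones.
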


As a consequence, varieties of minimal degree satisfy strong interpolation
(which is a stronger notion of interpolation discussed preceding \autoref{corollary:strong-interpolation-for-varieties-of-minimal-degree})
in characteristic $0$, as proven in \autoref{corollary:strong-interpolation-for-varieties-of-minimal-degree}.

\begin{remark}
	\label{remark:}
Although several bits and pieces of
\autoref{theorem:interpolation-minimal-surfaces} were previously known, the unknown cases
are the trickiest ones to deal with in the proof we present.

We now describe those parts of \autoref{theorem:interpolation-minimal-surfaces} which were previously
established. 
The dimension $1$ case is the well known fact that one can find
	a rational normal curve through $n+3$ points in $\bp^n$.
	The Veronese surface was shown to satisfy interpolation
	in \cite[Theorem 19]{coble:associated-sets-of-points},
	which is summarized in a more modern treatment in
	\cite[Theorem 5.6]{landesmanP:interpolation-problems:-del-pezzo-surfaces},
	piecing together the ideas in 
	\cite[Theorem 5.2, Theorem 5.6]{dolgachev:on-certain-families-of-elliptic-curves-in-projective-space}.
	It was already established that $2$-dimensional
	scrolls satisfy interpolation in
	Coskun's thesis ~\cite[Example, p.\ 2]{coskun:degenerations-of-surface-scrolls}, and furthermore,
	Coskun gives a method for computing the number of scrolls
	meeting a specified collection of general linear spaces.
	Finally, weak interpolation was established for
	scrolls of degree $d$ and dimension $k$ with
	$d \geq 2k - 1$ via the Gale transform in \cite[Theorem 4.5]{eisenbudP:the-projective-geometry-of-the-gale-transform}
	and independently via an explicit construction in \cite[Theorem 3.2]{cavaliere1995quadrics}.
	Further, in the case $k \leq d < 2k - 1$, it was shown in \cite[Theorem 3.2]{cavaliere1995quadrics}
	that scrolls pass through many points, though the resulting number is one point
	short of establishing weak interpolation.

While our methods are similar to those of \cite{coskun:degenerations-of-surface-scrolls},
they differ drastically from those of 
\cite[Theorem 4.5]{eisenbudP:the-projective-geometry-of-the-gale-transform}
and \cite[Theorem 3.2]{cavaliere1995quadrics}.
\end{remark}

\subsection{Notations and Conventions}
\label{subsection:notation}

We work over an algebraically closed field $\bk$ of arbitrary
characteristic.
We notate a sequence $(a, \ldots, a)$ of length $b$ as $(a^b)$. So, for example,
we would notate $(1,1,1,2,3,3)$ as $(1^3, 2, 3^2)$.
We also include the following idiosyncratic notation, mostly
regarding scrolls:
\begin{enumerate}
	\item When $X$ lies on a unique irreducible
		component of the Hilbert scheme, we let
		$\hilb X$ denote that irreducible component
		and let $\uhilb X$ denote the universal family
		over that component.
	\item Recall that a scroll is a projective bundle $\pi:X \ra \bp^1$
		embedded into projective space by $\sco_\pi(1)$.
If $X \cong \bp \sce$ with $\sce \cong \sco(a_1) \oplus \cdots \oplus (a_k)$, we say
$X$ is of type $(a_1, \ldots, a_k)$. We use $\scroll {a_1, \ldots, a_k}$ to refer to a smooth scroll of type $(a_1, \ldots, a_k)$.
	\item When dealing with a scroll $X$ in projective space, we use $d$ to refer to its degree, $k$ to refer to its dimension,
		and $n := d + k-1$ to refer to the dimension of the ambient projective space, $X \subset \bp^n$.
	\item If $X$ is a smooth scroll of degree $d$ and dimension $k$, we use $\minhilb d k := \hilb X$.
	\item Let $X$ be a smooth scroll of degree $d$ and dimension $k$. We use $\minhilbsmooth d k$ to refer to the complement in
		$\hilb X$ of the image under $\pi:\uhilb X \ra \hilb X$ of the singular locus of the map $\pi:\uhilb X \ra \hilb X$.
	\item We use $\broken d k$ to denote the closure of the locus of points in $\minhilb d k$ corresponding to
		varieties which are the union of a $k$-plane and a degree $d-1$, dimension $k$ scroll,
		intersecting in a $(k-1)$-plane which is a ruling plane of the degree $d-1$ scroll.
\end{enumerate}

\subsection{Acknowledgements}

I thank Joe Harris and Anand Patel for advising this project.
For helpful conversations and correspondence, I thank
Atanas Atanasov,
Francesco Cavazzani, 
Brian Conrad,
Izzet Coskun,
Anand Deopurkar,
Phillip Engel, 
Changho Han,
Joe Harris,
Brendan Hassett,
Allen Knutsen,
Eric Larson,
John Lesieutre,
Rahul Pandharipande,
Anand Patel,
Alex Perry,
Geoffrey Smith,
Hunter Spink,
Ravi Vakil,
Adrian Zahariuc,
Yifei Zhao,
Yihang Zhu,
and David Zureick-Brown.

\section{Background on scrolls}
\label{subsection:fano-schemes-of-scrolls}

In this section, we collect background results on
several descriptions of the structure
of scrolls, which we later use to show scrolls satisfy
interpolation.
In \autoref{lemma:segre-isomorphic-to-scroll} we show certain 
Segre embeddings are scrolls, 
in \autoref{lemma:linear-spaces-in-varieties-of-minimal-degree},
we describe Fano schemes of scrolls, in
\autoref{proposition:smooth-minimal-degree-hilbert-scheme},
we show smooth varieties of minimal degree correspond to
smooth points
of the Hilbert scheme,
in \autoref{lemma:coskun-degenerations-of-scrolls}
we describe possible degenerations of scrolls,
and in
\autoref{lemma:unique-segre-containing-linear-spaces},
we describe one method of constructing certain scrolls.

\begin{lemma}
	\label{lemma:segre-isomorphic-to-scroll}
	The scroll $\scroll {1^k}$ of dimension $k$ in $\bp^{2k-1}$ is, up to automorphism of $\bp^{2k-1}$,
	isomorphic to the Segre embedding $\bp^1 \times \bp^{k-1} \ra \bp^{2k-1}$.
\end{lemma}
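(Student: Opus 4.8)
The plan is to identify $\scroll{1^k}$ abstractly with $\bp^1 \times \bp^{k-1}$, then to check that the line bundle $\sco_\pi(1)$ defining the scroll embedding corresponds to $\sco(1,1)$ under this identification. Since both the scroll embedding and the Segre embedding are then given by the complete linear system of the same very ample line bundle on the same abstract variety, they will necessarily agree up to a linear automorphism of the target $\bp^{2k-1}$.

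First I would recall that projectivization is insensitive to twisting by a line bundle: fixing the Grothendieck convention $\pi_* \sco_{\bp(\sce)}(1) = \sce$, for any line bundle $\scl$ on $\bp^1$ there is a canonical isomorphism $\bp(\sce) \cong \bp(\sce \otimes \scl)$ under which
\[
\sco_{\bp(\sce \otimes \scl)}(1) \cong \sco_{\bp(\sce)}(1) \otimes \pi^* \scl,
\]
as one verifies by pushing forward, since $\pi_*$ of the right side equals $\sce \otimes \scl$. Applying this with $\sce = \sco^{\oplus k}$ and $\scl = \sco_{\bp^1}(1)$ gives $\scroll{1^k} = \bp(\sco(1)^{\oplus k}) \cong \bp(\sco^{\oplus k}) = \bp^1 \times \bp^{k-1}$, the last equality using triviality of the bundle.

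Next I would compute the embedding bundle. Writing $p, q$ for the two projections of $\bp^1 \times \bp^{k-1}$ and setting $\sco(a,b) := p^* \sco_{\bp^1}(a) \otimes q^* \sco_{\bp^{k-1}}(b)$, the relative hyperplane bundle of $\bp(\sco^{\oplus k})$ is $\sco(0,1)$, while $\pi^* \sco_{\bp^1}(1) = \sco(1,0)$. Hence the twisting formula above gives
\[
\sco_{\bp(\sco(1)^{\oplus k})}(1) \cong \sco(0,1) \otimes \sco(1,0) = \sco(1,1),
\]
which is exactly the line bundle defining the Segre embedding. A quick compatibility check confirms the ambient space: by Künneth $h^0(\sco(1,1)) = 2 \cdot k = 2k$, so both embeddings land in $\bp^{2k-1}$, and one computes $\deg \sco(1,1) = k$, matching the degree of $\scroll{1^k}$.

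Finally I would invoke the standard fact that two embeddings of a variety $V$ by complete linear systems of isomorphic line bundles differ by a projective automorphism: a choice of isomorphism $\sco_\pi(1) \cong \sco(1,1)$ induces an isomorphism on global sections, and the resulting linear isomorphism of $\bp^{2k-1} = \bp(H^0(-)^{\vee})$ intertwines the two maps. The only point requiring care is the bookkeeping with the $\bp(\sce)$ convention, together with verifying that the scroll embedding really is by the full system $|\sco_\pi(1)|$, equivalently that $\pi_* \sco_\pi(1) = \sco(1)^{\oplus k}$ has $h^0 = 2k$; I expect this mild verification to be the main, and only, obstacle.
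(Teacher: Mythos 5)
Your proof is correct and follows essentially the same route as the paper's: identify $\scroll{1^k}$ abstractly with $\bp^1 \times \bp^{k-1}$ using the invariance of projectivization under twisting by a line bundle, then observe that $\sco_\pi(1)$ becomes the Segre bundle $\sco(1,1)$, so the two embeddings agree up to a linear automorphism. The only difference is one of detail: you make explicit the twisting formula, the identification $\sco_\pi(1) \cong \sco(1,1)$, and the complete-linear-system bookkeeping, all of which the paper's shorter proof leaves implicit.
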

\begin{proof}
	Observe that by definition we have $\scroll {1^k} \cong \bp \left(  \oplus_{i=1}^k \sco_{\bp^1}(1)\right).$
	So, abstractly, $\scroll {1^k}$ is the trivial bundle over $\bp^1$, isomorphic to $\bp^1 \times \bp^{k-1}$.
	Further, the invertible sheaf $\sco_{\pi}(1)$ indeed embeds $\scroll {1^k}$ via the Segre embedding $\bp^1 \times \bp^{k-1} \ra \bp^{2k-1}$.
\end{proof}

\begin{proposition}
	\label{lemma:linear-spaces-in-varieties-of-minimal-degree}
	Suppose $X \subset \bp^n$ is a smooth scroll of minimal degree $d$ and dimension $k$, where $\pi: X \cong \bp\sce \ra \bp^1$ is the projection, for $\sce$ a
	locally free sheaf
	on $\bp^1$. Further, let $X \cong \scroll {a_1, \ldots, a_s, 1^j}$ with $a_1 \geq \cdots \geq a_s > 1$.
	Suppose that $X \subset \bp^n$ is embedded so that it is the span of the planes joining rational normal curves
	$C_1, \ldots, C_s, L_1, \ldots, L_j$ where $L_1, \ldots, L_j$ are lines and $C_1, \ldots, C_s$ are rational normal curves
	of degree at least $2$. Let $t$ be an integer with $1 \leq t \leq k - 1$.
Then, the $t$-planes contained in $X$ are of one of the following two forms:
	\begin{enumerate}
		\item[\customlabel{custom:line-small}{1}] If $j > 1$, let $P \subset \bp V$ be written as $P = \bp W$ for a two dimensional
			subspace $W \subset V$, and let $L_1, \ldots, L_j$ be the projectivizations of planes $P_1, \ldots, P_j \subset V$.
			Then, $W \in \spn(P_1, \ldots, P_j)$.
			
		\item[\customlabel{custom:line-large}{2}] $P$ is contained in some $(k-1)$-plane which is the fiber of the projection map $\pi:X \ra \bp^1$.
	\end{enumerate}
	The Fano scheme of $t$-planes in $X$ is smooth. If $j \geq 1$ and $t= 1$, it has two irreducible components, corresponding to planes of type
	\autoref{custom:line-small} and \autoref{custom:line-large}. It has one irreducible component 
	corresponding to planes of type \autoref{custom:line-large} otherwise.
	The component of planes of type \autoref{custom:line-large} is isomorphic to
the Grassmannian bundle $\gbundle {t+1} {\sce}$ over $\bp^1$. If it exists, meaning $t = 1$ and $j \geq 1$,
the component of planes of type \autoref{custom:line-small} is isomorphic to $\bp^{j-1}$.
\end{proposition}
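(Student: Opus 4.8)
The plan is to stratify according to the behavior of the restricted projection $\pi|_P : P \to \bp^1$. Since $P \cong \bp^t$ is irreducible, $\pi(P)$ is either a single point or all of $\bp^1$. If $\pi(P)$ is a point, then $P$ lies in a fiber of $\pi$, which is one of the ruling $(k-1)$-planes, giving a plane of type \autoref{custom:line-large}. The substance is to show that a $P$ dominating $\bp^1$ forces $t = 1$ and $j \geq 1$ and produces a plane of type \autoref{custom:line-small}. For this I would use that morphisms $P \to \bp\sce$ over $\bp^1$ correspond to line bundle quotients of $(\pi|_P)^*\sce$, together with the fact that $\sco_\pi(1)|_P \cong \sco_{\bp^t}(1)$ because $P$ is linearly embedded. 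Writing $\delta \geq 1$ for the degree of $\pi|_P$, a surjection $(\pi|_P)^*\sce = \bigoplus_i \sco(\delta a_i) \oplus \sco(\delta)^{\oplus j} \twoheadrightarrow \sco_{\bp^t}(1)$ can exist only if $\delta = 1$ and the surjection factors through the summand $\sco(1)^{\oplus j}$, since the maps out of each $\sco(a_i)$ vanish as $a_i \geq 2$; in particular $j \geq 1$. Finally a degree-one morphism $\bp^t \to \bp^1$ is a pencil of hyperplanes, whose base locus is empty only when $t \leq 1$, forcing $t = 1$. This simultaneously gives the classification and the constraint that type \autoref{custom:line-small} planes occur only when $t = 1$ and $j \geq 1$.

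Next I would identify the two loci with the claimed varieties. The fiberwise $t$-planes are governed by the universal subbundle of the appropriate rank on the relative Grassmannian $\gbundle{t+1}{\sce}$, which carries a tautological family of $t$-planes in the fibers of $\pi$ and hence yields a natural morphism $\gbundle{t+1}{\sce} \to F_t(X)$ to the Fano (Hilbert) scheme; I would check it is injective on points. Dominating lines correspond, by the computation above, to nonzero maps $\sco(1)^{\oplus j} \to \sco(1)$ up to scalar, that is, to points of $\bp(\bk^{\oplus j}) = \bp^{j-1}$; concretely the resulting quotient $\sce \to \sco(1)$ recovers the line $\bp W$ whose two-dimensional $W$ lies in $\spn(P_1, \ldots, P_j)$, matching the description of type \autoref{custom:line-small} planes and giving a morphism $\bp^{j-1} \to F_t(X)$ (a single point, namely $L_1$, when $j = 1$). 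Both images are closed since their domains are proper, and because a line cannot simultaneously dominate $\bp^1$ and lie in a fiber, the two loci are disjoint; combined with the classification this exhibits $F_t(X)$ set-theoretically as their disjoint union, and as the Grassmannian bundle alone when $j = 0$ or $t \geq 2$.

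Smoothness and the scheme-theoretic identifications I would extract from the normal bundle. For a type \autoref{custom:line-large} plane $P \subset F \subset X$, combining $0 \to N_{P/F} \to N_{P/X} \to N_{F/X}|_P \to 0$ with $N_{P/F} \cong \sco_{\bp^t}(1)^{\oplus (k-1-t)}$ and $N_{F/X}|_P \cong \sco_{\bp^t}$ (the normal bundle of a fiber over the smooth curve $\bp^1$ is trivial) yields $H^1(N_{P/X}) = 0$ and $h^0(N_{P/X}) = (t+1)(k-1-t) + 1 = \dim \gbundle{t+1}{\sce}$. For a type \autoref{custom:line-small} line, realized as a section of $\pi$ attached to the quotient $\sce \to \scl = \sco(1)$ with kernel $\scf = \bigoplus_i \sco(a_i) \oplus \sco(1)^{\oplus(j-1)}$, the normal bundle of a section is $N_{P/X} \cong \shom(\scf, \scl) = \bigoplus_i \sco(1 - a_i) \oplus \sco^{\oplus(j-1)}$, so $h^0(N_{P/X}) = j - 1 = \dim \bp^{j-1}$. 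In both cases the tangent space $T_{[P]}F_t(X) = H^0(N_{P/X})$ has dimension equal to that of the parametrizing variety through $[P]$; since that variety furnishes a family of the same dimension, the local dimension of $F_t(X)$ is squeezed to equal $\dim T_{[P]}F_t(X)$, so $F_t(X)$ is smooth at $[P]$. Smoothness, together with the point-injective and tangent-injective parametrizing morphisms, then promotes the two maps to isomorphisms onto their images, and disjointness identifies these images as exactly the irreducible components.

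The main obstacle I anticipate is the type \autoref{custom:line-small} case: the normal bundle has nonvanishing $H^1$ as soon as some $a_i \geq 3$, so smoothness cannot be deduced from unobstructedness and must instead come from matching $h^0(N_{P/X}) = j-1$ with the dimension of the explicit family $\bp^{j-1}$. Pinning down the section normal bundle formula $\shom(\scf, \scl)$ with the correct kernel $\scf$, and then upgrading the bijective, tangent-injective morphisms to genuine scheme isomorphisms rather than mere bijections, is where the care is required.
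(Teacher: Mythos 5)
Your proposal is correct and follows essentially the same route as the paper's (sketched) proof: a set-theoretic classification of the $t$-planes via the restricted projection $\pi|_P$, a scheme-theoretic identification of the two loci with the Grassmannian bundle $\gbundle{t+1}{\sce}$ and with $\bp^{j-1}$, and a cohomological verification of smoothness via normal bundles --- including the key observation that unobstructedness fails for the section-type lines (where $H^1$ of the normal bundle can be nonzero), so that smoothness there must come from the tangent-space/dimension squeeze against the explicit $\bp^{j-1}$ family. The paper itself only outlines this strategy and defers the details to \cite[Proposition 5.2.2]{landesman:undergraduate-thesis}, and your write-up is a faithful implementation of that outline.
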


One fairly
routine proof follows by first describing the Fano scheme first set theoretically,
then scheme theoretically, and then verifying
cohomologically that planes are smooth points of the Fano scheme.
See \cite[Proposition 5.2.2]{landesman:undergraduate-thesis} for a detailed proof.

\begin{proposition}
	\label{proposition:smooth-minimal-degree-hilbert-scheme}
	If $X$ is a smooth variety of minimal degree
	then $H^1(X, N_{X/\bp^n}) = 0$.
	In particular, such a variety is a smooth
	point of the Hilbert scheme.
	Further, if $X$ is a smooth scroll of minimal degree $d$ and dimension $k$ in $\bp^n$ then $h^0(X, N_{X/\bp^n}) = (d+k)^2 - k^2 - 3$. In particular, $\dim \minhilb d k =  (d+k)^2 - k^2 - 3$.
\end{proposition}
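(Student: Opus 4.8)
The plan is to extract everything from two fundamental exact sequences on $X$: the restricted Euler sequence
\[
0 \to \sco_X \to \sco_X(1)^{\oplus(n+1)} \to T_{\bp^n}|_X \to 0
\]
and the normal bundle sequence
\[
0 \to T_X \to T_{\bp^n}|_X \to N_{X/\bp^n} \to 0.
\]
From the normal bundle sequence, $H^1(N_{X/\bp^n}) = 0$ follows as soon as $H^1(T_{\bp^n}|_X) = 0$ and $H^2(T_X) = 0$, since these sandwich $H^1(N_{X/\bp^n})$. For the dimension count I would further check that all higher cohomology of $N_{X/\bp^n}$ vanishes, so that $h^0(N_{X/\bp^n}) = \chi(N_{X/\bp^n}) = \chi(T_{\bp^n}|_X) - \chi(T_X)$, and then compute the two Euler characteristics separately.

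The cohomology of $T_{\bp^n}|_X$ is uniform across all varieties of minimal degree. Since such varieties are arithmetically Cohen--Macaulay and rational, one has $h^0(\sco_X) = 1$, $H^i(\sco_X) = 0$ for $i > 0$, $h^0(\sco_X(1)) = n+1$, and $H^i(\sco_X(1)) = 0$ for $i > 0$ (intermediate vanishing from the ACM property, top vanishing by a direct line bundle computation). Feeding these through the Euler sequence gives $H^i(T_{\bp^n}|_X) = 0$ for all $i \geq 1$ and $\chi(T_{\bp^n}|_X) = (n+1)\chi(\sco_X(1)) - \chi(\sco_X) = (n+1)^2 - 1$. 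This already disposes of $\bp^n$ itself (where $N_{X/\bp^n} = 0$) and reduces the first claim to controlling $H^2(T_X)$; the quadric and Veronese cases can be checked directly, while the scroll case is the substantive one.

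For a scroll $X \cong \bp\sce \to \bp^1$ I would compute the cohomology of $T_X$ by pushing forward to $\bp^1$. The relative Euler sequence $0 \to \sco_X \to \pi^*\sce^\vee \otimes \sco_\pi(1) \to T_\pi \to 0$, together with the identity $\pi_*(\pi^*\sce^\vee \otimes \sco_\pi(1)) = \sce^\vee \otimes \sce$ (using $\pi_*\sco_\pi(1) = \sce$) and the vanishing of higher direct images, identifies $H^i(T_\pi)$ with cohomology of $\sce^\vee \otimes \sce$ on $\bp^1$; since $\bp^1$ is a curve this forces $H^i(T_\pi) = 0$ for $i \geq 2$ and $\chi(T_\pi) = \chi(\sce^\vee \otimes \sce) - 1$. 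Writing $\sce = \bigoplus_{i=1}^k \sco(a_i)$, the bundle $\sce^\vee \otimes \sce = \bigoplus_{i,j}\sco(a_i - a_j)$ has $\chi(\sce^\vee \otimes \sce) = \sum_{i,j}(a_i - a_j + 1) = k^2$, the off-diagonal shifts cancelling. The relative tangent sequence $0 \to T_\pi \to T_X \to \pi^*T_{\bp^1} \to 0$, combined with $H^i(\pi^*T_{\bp^1}) = H^i(\bp^1, \sco(2))$ vanishing for $i \geq 1$ and $\chi(\pi^*T_{\bp^1}) = 3$, then yields $H^i(T_X) = 0$ for $i \geq 2$ and $\chi(T_X) = (k^2 - 1) + 3 = k^2 + 2$.

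Assembling these facts gives $H^i(N_{X/\bp^n}) = 0$ for all $i \geq 1$, so $X$ is a smooth point of the Hilbert scheme, and
\[
h^0(N_{X/\bp^n}) = \chi(T_{\bp^n}|_X) - \chi(T_X) = (n+1)^2 - 1 - (k^2 + 2) = (d+k)^2 - k^2 - 3,
\]
using $n + 1 = d + k$; since $\minhilb d k$ is the unique component through $[X]$, its dimension equals this number. I expect the main obstacle to be the cohomology of $T_X$: one must set up the two relative sequences and the Leray reductions carefully, keeping track of the projective bundle convention so that $\pi_*\sco_\pi(1)$ comes out to $\sce$, since this is exactly where both the vanishing $H^2(T_X) = 0$ and the numerical input $\chi(\sce^\vee \otimes \sce) = k^2$ enter.
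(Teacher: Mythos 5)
Your proposal is correct and follows essentially the same route as the paper's (sketched) proof: reduce via the Eisenbud--Harris classification, then chase the normal, Euler, relative Euler, and relative tangent exact sequences together with the pushforward to $\bp^1$ (Leray) to get both the vanishing $H^i(N_{X/\bp^n}) = 0$ for $i \geq 1$ and the count $h^0(N_{X/\bp^n}) = \chi(T_{\bp^n}|_X) - \chi(T_X) = (d+k)^2 - k^2 - 3$. Your write-up is in fact more explicit than the paper's proof, which delegates these sequence chases to the author's thesis; the only point to watch is that the vanishing $H^i(\sco_X) = 0$ and $H^i(\sco_X(1)) = 0$ for $i > 0$ should be justified by direct computation (as you do for scrolls) rather than by appealing to rationality, since the paper works in arbitrary characteristic.
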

See \cite[Proposition 5.3.2, Proposition 5.3.5 and Proposition 5.3.16]{landesman:undergraduate-thesis} for a more detailed proof than the sketch given below.
\begin{proof}
	First, by \cite[Theorem 1]{eisenbudH:on-varieties-of-minimal-degree}, we only need
	verify this when $X$ is a quadric surface, the $2$ Veronese,
	or a smooth scroll.
	The case of hypersurfaces follows since
	for $X \subset \bp^n$ a hypersurface,
$N_{X/\bp^n} \cong \sco(\deg X)$.
The case of the $2$-Veronese follows from a fairly straightforward
chase of long exact sequences using the
normal exact sequence and Euler exact sequence.
Finally, the case of scrolls is somewhat more involved,
but follows from chasing exact sequences using
the normal exact sequence, the relative tangent exact sequence (see
\cite[Theorem 4.5.13]{brandenburg:tensor-categorical-foundations-of-algebraic-geometry}),
the Euler exact sequence,
and
the Leray spectral sequence.
\end{proof}

\begin{remark}
	\label{remark:grassmannian-compactification}
	For the remainder of the paper, instead of working
	in the compactification of the locus of smooth scrolls
	given by the Hilbert scheme, we work in the compactification
	of smooth scrolls of dimension $k$ and degree $d$
	given by rational curves of degree $d$ in the Grassmannian
	$G(k, k + d)$. This is described in the case
	$k = 2$ in \cite[Section 3]{coskun:degenerations-of-surface-scrolls},
	and the generalization to $k > 2$ is completely analogous.
	
	Since the Hilbert scheme and the compactification of rational
	curves in this Grassmannian are birational, the number of
	scrolls through a general collection of planes of specified
	dimensions is the same for either compactification.
	So to prove interpolation holds, it suffices to show
	there is a point in this compactification of rational curves
	corresponding to a scroll through a general collection
	of linear spaces.
\end{remark}

The following is a direct generalization of
\cite[Proposition 4.1]{coskun:the-enumerative-geometry-of-del-pezzo-surfaces-via-degenerations},
and the proof is nearly the same, mutatis mutandis.

\begin{lemma}[Generalization of \protect{\cite[Proposition 4.1]{coskun:degenerations-of-surface-scrolls}}]
	\label{lemma:coskun-degenerations-of-scrolls}
	Suppose $\scx \ra B$ is a flat family with $B$ a curve
so that the general fiber is a smooth scroll, and that the special fiber is nondegenerate.
Then, the special fiber is a connected variety whose irreducible components are
scrolls.
\end{lemma}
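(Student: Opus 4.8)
The plan is to identify the special fiber as a (possibly reducible) variety of minimal degree and then run the degree bookkeeping of \cite[Proposition 4.1]{coskun:degenerations-of-surface-scrolls} in higher dimension. Write $X_0$ for the special fiber and set $n := d + k - 1$, so that the general fiber is a smooth scroll of degree $d$ and dimension $k$ spanning $\bp^n$. After replacing $B$ by its normalization (which preserves the hypotheses and the statement), I may assume $B$ is a smooth curve and that $\scx \ra B$ is projective, hence proper. Flatness of $\scx \ra B$ makes the Hilbert polynomial constant along $B$, so every fiber, and in particular $X_0$, is equidimensional of dimension $k$ and has degree $d$. Since $X_0$ is assumed nondegenerate, the minimal degree bound $\deg Z \geq \codim_{\bp^n} Z + 1$ for a nondegenerate $Z$ must be an equality, so $X_0$ is a variety of minimal degree in $\bp^n$ in the (possibly reducible) sense.

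Next I establish connectedness via Stein factorization. Factor $\scx \ra B' \ra B$, where $\scx \ra B'$ has geometrically connected fibers and $B' \ra B$ is finite; the degree of $B' \ra B$ equals the number of connected components of a general fiber, which is $1$ since the general fiber is an irreducible scroll. Thus $B' \ra B$ is finite and birational onto the normal curve $B$, hence an isomorphism by Zariski's main theorem, and every fiber of $\scx \ra B$ is connected. In particular $X_0$ is connected.

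The main step is to show the components are scrolls. Let $X_1, \dots, X_m$ be the irreducible components of $X_0$, each of dimension $k$, and write $\langle X_i \rangle \cong \bp^{n_i}$ for the span of $X_i$, so nondegeneracy of $X_i$ in its span gives $\deg X_i \geq n_i - k + 1$. Generalizing \cite[Proposition 4.1]{coskun:degenerations-of-surface-scrolls} verbatim, I would run a degree-and-span count: using that $X_0$ is connected, nondegenerate in $\bp^n$, and has total degree $d = n - k + 1$, one shows the spans $\langle X_i \rangle$ fill out $\bp^n$ with the components meeting only along $(k-1)$-planes, and that each inequality $\deg X_i \geq n_i - k + 1$ is forced to be an equality. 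Hence each $X_i$ is itself a variety of minimal degree in $\langle X_i \rangle$, so by \cite[Theorem 1]{eisenbudH:on-varieties-of-minimal-degree} it is a quadric, a Veronese, a scroll, or a linear space. A component of a configuration glued in codimension one must contain a $(k-1)$-plane along which it meets its neighbors; the Veronese surface contains no lines, and a smooth quadric $k$-fold with $k \geq 3$ contains no $(k-1)$-plane since its maximal linear spaces have dimension $\lfloor k/2 \rfloor < k-1$, so neither can occur as a proper component. The only quadric that is simultaneously a scroll is the $2$-dimensional one $\scroll{1,1} \cong \bp^1 \times \bp^1$ of \autoref{lemma:segre-isomorphic-to-scroll}, and this is exactly the case $d = 2$; treating linear spaces as degenerate scrolls, every $X_i$ is a scroll.

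The main obstacle is the degree-and-span bookkeeping, where one must show \emph{simultaneously} that the minimal degree bound is attained on each component and that the components are glued precisely along $(k-1)$-planes; this is where connectedness and nondegeneracy of $X_0$ are combined, and it is the heart of Coskun's argument. A secondary technical point is reducedness: a priori the flat limit could acquire embedded or non-reduced structure, so one must check, using that varieties of minimal degree are arithmetically Cohen–Macaulay and hence have no embedded components and are generically reduced, that the argument applies to $X_0$ itself rather than merely to $(X_0)_{\mathrm{red}}$.
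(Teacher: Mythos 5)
Your overall route is the same as the paper's: the paper offers no independent argument for this lemma, saying only that Coskun's proof of \cite[Proposition 4.1]{coskun:degenerations-of-surface-scrolls} generalizes ``mutatis mutandis,'' and your skeleton --- the flat limit is a connected, nondegenerate, equidimensional scheme of minimal degree; connectedness via Stein factorization; then the degree-and-span bookkeeping feeding into the classification of \cite{eisenbudH:on-varieties-of-minimal-degree} --- is exactly that generalization, with the same core counting step deferred to Coskun just as the paper defers it. The steps you do carry out (constancy of dimension and degree from flatness, and the Stein factorization argument) are correct.

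The genuine gap is in the one substantive step you add beyond citation: the exclusion of the Veronese. Your argument is that a component must meet a neighboring component along a $(k-1)$-plane, which is vacuous when the special fiber is irreducible, i.e.\ $m=1$. That case is not hypothetical: the Veronese surface in $\bp^5$ has the same dimension, degree, span, and in fact the same Hilbert polynomial $(t+1)(2t+1)$ as the smooth scrolls $\scroll{1,3}$ and $\scroll{2,2}$, so nothing you have said prevents a family of degree-$4$ surface scrolls from degenerating flatly to it; similarly, for $k \geq 3$ and $d = 4$ the iterated cones over the Veronese have the numerics of $\minhilb{4}{k}$. The repair --- and this is precisely where ``mutatis mutandis'' requires genuine modification rather than transcription --- is to use the rulings: since $\scx \ra B$ is flat over a (normalized, hence smooth) curve, every associated point of $\scx$ dominates $B$, so $X_0$ lies in the closure of the union of the general fibers; each general fiber is swept out by its ruling $(k-1)$-planes, the containment locus in the relative Grassmannian is closed and proper over $B$, and hence every point of $X_0$ lies on a $(k-1)$-plane contained in $X_0$. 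Since a plane is irreducible, every component of $X_0$ is then covered by $(k-1)$-planes; the Veronese surface contains no lines, and a $k$-dimensional cone over the Veronese contains no linear space of dimension exceeding $k-2$, so neither can occur. Note that Coskun's surface argument can get away with lines, but lines do \emph{not} suffice once $k \geq 3$, because cones over the Veronese are covered by lines through their vertex --- one must use the top-dimensional ruling planes. (For the smooth Veronese alone one could instead invoke \autoref{proposition:smooth-minimal-degree-hilbert-scheme}: it is a smooth point of the Hilbert scheme lying on a unique, $27$-dimensional component, not the $29$-dimensional component $\minhilb{4}{2}$; but that does not dispose of the singular cones.)

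A smaller point: your proposed handling of reducedness is circular. You cannot appeal to ACM-ness of varieties of minimal degree to rule out non-reduced or embedded structure in $X_0$, since whether $X_0$ is such a variety is exactly what is at issue; instead, multiplicities have to be absorbed into the degree-and-span count itself, showing that $d = \sum_i m_i \deg X_i$ together with the span estimates forces every $m_i = 1$ (embedded components, at least, are ruled out for free once one argues on the cycle-theoretic level).
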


\begin{lemma}
	\label{lemma:unique-segre-containing-linear-spaces}
	Fix $k$ lines $\ell_1, \ldots, \ell_k \subset \bp^{2k-1}$ and three $(k-1)$-planes $\Lambda_1, \Lambda_2,\Lambda_3 \subset \bp^{2k-1}$.
	Suppose that $\ell_i \cap \Lambda_j$ consists of a single point for all pairs $(i,j)$, and that no two of these intersection points
	are the same. Further, suppose the lines $\ell_1, \ldots, \ell_k$ span $\bp^{2k-1}$.
	Then, there is a unique scheme $X \subset \bp^{2k-1}$ so that $X \cong \bp^1 \times \bp^{k-1}$ is the Segre embedding and $X$
	contains $\ell_1, \ldots, \ell_k, \Lambda_1, \Lambda_2, \Lambda_3$.

	Further, the lines $\ell_1, \ldots, \ell_k$ appear as the images of lines of the form $\bp^1 \times \left\{ q_k \right\} \subset \bp^1 \times \bp^{k-1}$
	under the Segre embedding, with $q_i \in \bp^{k-1}$.
\end{lemma}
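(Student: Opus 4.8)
The plan is to establish this as a statement about the Segre variety $\bp^1 \times \bp^{k-1}$ and its lines, using the rigidity of the Segre embedding. The key geometric fact I want to exploit is that the Segre variety $X \cong \bp^1 \times \bp^{k-1} \subset \bp^{2k-1}$ is swept out by two rulings: the "horizontal" lines $\bp^1 \times \{q\}$ (which are genuine lines in $\bp^{2k-1}$) and the "vertical" $(k-1)$-planes $\{p\} \times \bp^{k-1}$. By Lemma~\ref{lemma:segre-isomorphic-to-scroll}, this is precisely the scroll $\scroll{1^k}$, so the horizontal lines are the lines of type~\ref{custom:line-small} and the vertical $(k-1)$-planes are the fibers of $\pi$ (type~\ref{custom:line-large}). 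So I expect that the three $(k-1)$-planes $\Lambda_1, \Lambda_2, \Lambda_3$ must be the fibers of the scroll, and the lines $\ell_1, \ldots, \ell_k$ must be among the horizontal ruling lines—which is exactly what the "Further" clause asserts.

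**First I would prove uniqueness via a dimension count / rigidity argument.** I would use the fact that on $\bp^1 \times \bp^{k-1}$, a line meeting three distinct vertical $(k-1)$-planes in three distinct points must be a horizontal ruling line $\bp^1 \times \{q\}$: the only curves on the Segre of degree meeting each vertical plane once are the horizontal lines, since any other curve class has the wrong intersection numbers with the vertical planes. Thus once I fix an isomorphism $X \cong \bp^1 \times \bp^{k-1}$ together with the identification $\Lambda_1, \Lambda_2, \Lambda_3 = \{p_1\} \times \bp^{k-1}, \{p_2\} \times \bp^{k-1}, \{p_3\} \times \bp^{k-1}$, each $\ell_i$ corresponds to a horizontal line $\bp^1 \times \{q_i\}$, and the point $\ell_i \cap \Lambda_j$ corresponds to $(p_j, q_i)$. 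The three points $q_i \in \bp^{k-1}$ determined by the intersections $\ell_i \cap \Lambda_1$ (say) together with the isomorphism $\Lambda_1 \cong \bp^{k-1}$ completely pin down the map $\{p_1\} \times \bp^{k-1} \to \Lambda_1$. The main rigidity input is that an isomorphism $\bp^1 \cong \bp^1$ is determined by its values on three points, and an isomorphism $\bp^{k-1} \cong \Lambda_1$ is determined up to the data I am prescribing; combining these forces the embedding to be unique.

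**For existence I would construct $X$ explicitly.** Since the $\ell_i$ span $\bp^{2k-1}$ and each meets each $\Lambda_j$ in one point, I would first set up coordinates so that $\Lambda_1, \Lambda_2, \Lambda_3$ become, respectively, the $0$, $\infty$, and $1$ fibers: concretely, I would use the three points $\ell_i \cap \Lambda_1$ and $\ell_i \cap \Lambda_2$ to parametrize each $\ell_i$ by $\bp^1$, and then define the Segre map $\bp^1 \times \bp^{k-1} \to \bp^{2k-1}$ by sending $(s, q)$ to the point on the (unique) line through the images of $q$ under the identifications of $\Lambda_1, \Lambda_2$ with $\bp^{k-1}$ determined by $q_1, \ldots, q_k$. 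The three-plane condition on $\Lambda_3$ is the one nontrivial constraint, and I would check that the prescribed incidences $\ell_i \cap \Lambda_3$ are automatically consistent—this is where the genericity/distinctness hypotheses on the intersection points are used, and where I would verify the constructed $X$ is genuinely a Segre variety (of type $(1^k)$) rather than some degenerate image.

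**The hard part will be** the bookkeeping in the existence construction: one must verify that the incidence data at $\Lambda_3$ imposes no obstruction and that the resulting map is an embedding with image a smooth Segre variety, using the spanning hypothesis to rule out degeneration. The uniqueness direction is comparatively clean once the rulings are identified via Lemma~\ref{lemma:segre-isomorphic-to-scroll} and \autoref{lemma:linear-spaces-in-varieties-of-minimal-degree}, which tell us exactly what the lines and fibers of $\scroll{1^k}$ look like; the subtlety there is only in confirming that no line of $X$ other than a horizontal ruling line can meet all three $\Lambda_j$ transversally, which follows from the intersection theory on $\bp^1 \times \bp^{k-1}$.
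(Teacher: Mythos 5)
Your proposal is correct and is essentially the paper's own argument unpacked: the paper proves this lemma by a one-line appeal to the fact that a scroll is uniquely determined as the planes swept out by rational normal curves (here the lines $\ell_i$) equipped with compatible isomorphisms, and your proof---identifying the $\Lambda_j$ as fibers, the $\ell_i$ as horizontal ruling lines, pinning down the parametrizations of the $\ell_i$ by their three intersection points with the $\Lambda_j$ via rigidity of automorphisms of $\bp^1$, then sweeping out the spans of corresponding points---is exactly that fact spelled out in this special case. The one detail worth stating explicitly is that the spanning hypothesis forces the $k$ points $\ell_1 \cap \Lambda_j, \ldots, \ell_k \cap \Lambda_j$ to be linearly independent, hence to span $\Lambda_j$, which is what makes the swept-out variety actually contain each $\Lambda_j$ and ensures it is a nondegenerate (genuine Segre) image.
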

For a more detailed proof, see \cite[Lemma 7.9.1]{landesman:undergraduate-thesis}.
\begin{proof}
	This is a special case of the fact that a scroll
	is uniquely determined as the planes swept out
	by a collection of rational normal curves with compatible
	isomorphisms.
\end{proof}

\section{Interpolation of varieties of minimal degree}
\label{section:interpolation-of-varieties-of-minimal-degree}

In this section, we prove that any smooth variety of minimal degree satisfies interpolation.
We work not in the compactification of smooth scrolls given
by the Hilbert scheme, but rather that given by rational curves in
the Grassmannian,
as described in \autoref{remark:grassmannian-compactification}.
By \cite[Theorem 1]{eisenbudH:on-varieties-of-minimal-degree}, we can
show this separately in the cases of a Hilbert scheme whose general member is a quadric hypersurface,
the Veronese surface in $\bp^5$ and a smooth rational normal scroll.
Since quadrics satisfy interpolation and the 2-Veronese
satisfies interpolation by
\cite[Theorem 5.6]{landesmanP:interpolation-problems:-del-pezzo-surfaces}, we shall concentrate
on showing that rational normal scrolls satisfy interpolation.
The general idea of the proof is to fix the dimension of scrolls, and induct on
the degree.
The elements of this inductive argument
is summarized verbally in \autoref{theorem:scrolls-interpolation} and
pictorially in \autoref{figure:induction-schematic}.
Most of the remaining subsections each prove one of the Propositions
appearing in \autoref{figure:induction-schematic}.

\subsection{The set-up for scrolls}
In order to show scrolls satisfy interpolation,
we start by stating the precise number of points and linear spaces a scroll must
pass through to satisfy interpolation.

\begin{lemma}
	\label{lemma:interpolation-numerics}
	The condition of interpolation means that we can find a degree $d$ dimension $k$ scroll 
	\begin{itemize}
		\item containing $d + 2k + 1$ general points and meeting a general $d-2k + 1$-plane if $d \geq 2k - 1$,
		\item containing $d + 2k + 2$ general points and meeting a general $2(d-k)$-plane if $k \leq d \leq 2k - 2$.
	\end{itemize}
\end{lemma}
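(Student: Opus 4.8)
The plan is to unwind the definition of interpolation given in the introduction for the family $U = \minhilb d k$, which reduces the entire statement to an arithmetic computation of the quotient and remainder of $\dim U$ upon division by $n - k$. Recall from that definition that, upon writing $\dim U = q\cdot(n-k) + r$, interpolation asks for a member of $U$ passing through $q$ general points and meeting a general plane of dimension $n - k - r$; so the content of the lemma is simply to evaluate $q$ and $n - k - r$ explicitly in each range of $d$.

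First I would record the two numerical inputs. By \autoref{proposition:smooth-minimal-degree-hilbert-scheme}, the relevant family has dimension $\dim \minhilb d k = (d+k)^2 - k^2 - 3 = d^2 + 2dk - 3$. Second, since a scroll of degree $d$ and dimension $k$ sits in $\bp^n$ with $n = d + k - 1$, the number of conditions imposed by a single general point is $n - k = d - 1$. The remaining task is to divide $d^2 + 2dk - 3$ by $d - 1$ and interpret the result.

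Next I would perform the division in the form $d^2 + 2dk - 3 = (d-1)(d + 2k + 1) + (2k - 2)$, so that the tentative quotient is $d + 2k + 1$ with surplus $2k - 2$. The two cases of the lemma are dictated precisely by comparing this surplus with $n - k = d - 1$. When $d \geq 2k - 1$ we have $2k - 2 \leq d - 1$, so $r = 2k - 2$ is a legitimate remainder, the number of points is $q = d + 2k + 1$, and the plane has dimension $n - k - r = (d-1) - (2k-2) = d - 2k + 1$, yielding the first bullet. When $k \leq d \leq 2k - 2$ the surplus exceeds $n - k$, so I would carry one copy of $(d-1)$ into the quotient, rewriting $d^2 + 2dk - 3 = (d-1)(d + 2k + 2) + (2k - d - 1)$; this gives $q = d + 2k + 2$, remainder $r = 2k - d - 1$ satisfying $0 \leq r \leq d - 1$ in this range, and plane dimension $(d-1) - (2k - d - 1) = 2(d-k)$, yielding the second bullet.

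I expect no substantive obstacle: the lemma merely translates the abstract interpolation count into the explicit incidence conditions to be verified later. The only genuine care needed is the case division itself, which is forced by keeping $r$ in the range $0 \leq r \leq n - k$, together with the boundary behavior. At the two boundary values $d = 2k - 1$ and $d = k$ one finds $r = n - k$, so the plane $\Lambda$ degenerates to a point and meeting it coincides with passing through an additional general point; there the two decompositions agree, and I would note this so that the stated formulas remain valid across the full range $d \geq k$.
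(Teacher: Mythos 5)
Your proposal is correct and follows essentially the same route as the paper's proof: both take $\dim \minhilb d k = (d+k)^2 - k^2 - 3$ from \autoref{proposition:smooth-minimal-degree-hilbert-scheme}, divide by $n - k = d - 1$ via the decomposition $(d-1)(d+2k+1) + (2k-2)$, and split into the two cases according to whether the surplus $2k-2$ fits below $d-1$. Your explicit second decomposition $(d-1)(d+2k+2) + (2k-d-1)$ and your remark on the boundary values $d = 2k-1$ and $d = k$ (where the plane degenerates to a point) are slightly more careful than the paper's terse treatment, but the argument is the same.
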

\begin{proof}
	By \autoref{proposition:smooth-minimal-degree-hilbert-scheme}, we know
\begin{align*}
		\dim \minhilb d k
		&= (d+k)^2 - k^2- 3 \\
		&= (d-1) \left( d+2k + 1 \right) + 2k - 2.
	\end{align*}
	Note that we always have $d \geq k$ and $2k - 2 \leq d-1$ when $d \geq 2k - 1$. Therefore, in the case $d > 2k - 1$,
	interpolation means the scroll passes through $d + 2k + 1$ points and meets a $(d - 2k + 1)$-plane.
		Next, when $k < d \leq 2k - 2$, we have $d-1 <2k - 2 < 2(d-1)$, and so for interpolation to hold, such
	schemes must pass through $d + 2k + 2$ points and meet a $2(d-k)$-plane.
\end{proof}

We prove that scrolls satisfy interpolation by induction.
We fix an integer $k$ and induct on the degree of $k$-dimensional varieties.
In order to prove the theorem for a variety of dimension $k$ and degree $d$, we 
make the following inductive hypotheses.
\begin{enumerate}
	\item[\customlabel{custom:ind-high}{ind-1}] When $d > k$, we assume $\minhilb {d-1} k$ satisfies interpolation.
	\item[\customlabel{custom:ind-mid}{ind-2}] When $k < d \leq 2k - 1$, we assume there is a variety in $\minhilb {d-1} k$ containing a general $(2k-d-1)$-plane and containing $2d$ points.
\end{enumerate}

\subsection[Degree at least $2k-1$]{Inductive degeneration for degree at least $2k-1$}

The main aim of this subsection is to prove \autoref{lemma:high-induction},
showing that degree $d$ scrolls satisfy interpolation for $d \geq 2k-1$,
assuming our inductive hypothesis \autoref{custom:ind-high}.

\begin{proposition}
	\label{lemma:high-induction}
	Assuming induction hypothesis ~\ref{custom:ind-high}, if $d > 2k-1$ then $\minhilb d k$ satisfies interpolation.
\end{proposition}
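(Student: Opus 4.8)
The plan is to prove interpolation by a degeneration argument, specializing a general degree $d$ scroll to a reducible variety built from lower-degree data to which the inductive hypothesis applies. By \autoref{lemma:interpolation-numerics}, for $d > 2k-1$ it suffices to produce a degree $d$, dimension $k$ scroll passing through $d+2k+1$ general points and meeting a general $(d-2k+1)$-plane $\Lambda$. The strategy is to work in the compactification by rational curves in the Grassmannian $G(k,k+d)$ as in \autoref{remark:grassmannian-compactification}, and to degenerate to a point of $\broken d k$: a variety $X_0 = \Lambda' \cup X'$ where $X'$ is a degree $d-1$, dimension $k$ scroll and $\Lambda'$ is a $k$-plane meeting $X'$ along a ruling $(k-1)$-plane of $X'$. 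The count matches because attaching the extra $k$-plane $\Lambda'$ raises the degree by one and, correspondingly, allows the configuration to absorb extra incidence conditions beyond those $X'$ alone can satisfy.

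The key steps, in order, are as follows. First I would distribute the $d+2k+1$ general points and the linear space $\Lambda$ between the two components: assign some of the points (and possibly the meeting condition with $\Lambda$) to the $k$-plane $\Lambda'$, which can be chosen freely subject to the constraint that it passes through its assigned points and meets its assigned linear space, and assign the remaining points to the scroll $X'$. Because $\Lambda'$ is a full $k$-plane, it can be made to pass through $k+1$ general points and meet $\Lambda$, giving it a large budget. Second, I would invoke the inductive hypothesis \autoref{custom:ind-high}, namely that $\minhilb{d-1}k$ satisfies interpolation, to find $X'$ passing through the points allotted to it while also meeting the $(k-1)$-plane along which it must glue to $\Lambda'$. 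Here I must be careful that the gluing locus is a ruling plane of $X'$, so the incidence conditions imposed on $X'$ include meeting the appropriate ruling-type $(k-1)$-plane; the numerics of \autoref{lemma:interpolation-numerics} applied to $d-1$ must leave exactly the right number of free conditions. Third, having built the nodal variety $X_0 \in \broken d k$ through all the prescribed data, I would appeal to \autoref{lemma:coskun-degenerations-of-scrolls} and the smoothness of the Hilbert scheme at smooth scrolls (\autoref{proposition:smooth-minimal-degree-hilbert-scheme}) to conclude that $X_0$ is a smoothable limit of honest smooth degree $d$ scrolls, and that a nearby smooth scroll still passes through the general points and meets $\Lambda$. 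Since the incidence conditions are open, a general smooth member of the family through $X_0$ satisfies them, establishing interpolation.

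The main obstacle I expect is the bookkeeping in the second step: verifying that the number of points assigned to $X'$, together with the constraint of meeting the designated ruling $(k-1)$-plane, is exactly the amount of incidence data that the inductive hypothesis for $\minhilb{d-1}k$ can accommodate, and that the dimension of the family of valid gluings is correct. Concretely, one must check that $\dim\minhilb{d}k - \dim\minhilb{d-1}k$ matches the $(k+1)$-dimensional worth of conditions absorbed by $\Lambda'$ together with the gluing freedom, so that the degeneration is dominant onto the relevant incidence locus and no conditions are over- or under-counted. A secondary subtlety is ensuring that the limit $X_0$ genuinely lies in the smoothable component $\broken d k$ rather than in some other degeneration stratum, which is where \autoref{lemma:coskun-degenerations-of-scrolls} and the explicit description of $\broken d k$ in \autoref{subsection:notation} do the essential work.
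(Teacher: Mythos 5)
Your overall strategy---degenerating to a broken scroll $X' \cup \Lambda'$ in $\broken{d}{k}$ and invoking \autoref{custom:ind-high} for $\minhilb{d-1}{k}$---matches the paper's, but two of your steps fail as stated. First, your allocation of points is inconsistent with the gluing constraint. The $k$-plane $\Lambda'$ must contain a ruling $(k-1)$-plane of $X'$, so it has only one free dimension left: it cannot pass through $k+1$ general points; and if you instead fix $\Lambda'$ by $k+1$ general points first, then requiring $X'$ to contain a $(k-1)$-plane \emph{inside} $\Lambda'$ is not a condition that interpolation for $\minhilb{d-1}{k}$ (points plus meeting one linear space) can deliver. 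Moreover, a nondegenerate degree $d-1$, dimension $k$ scroll spans only a hyperplane of $\bp^n$, so the points you assign to $X'$ cannot remain general in $\bp^n$; they must first be specialized into a hyperplane $H$. This specialization, absent from your proposal, is the crux of the paper's construction: one specializes $d+2k-1$ of the points into $H$, lets $\ell$ be the line through the two remaining points and $q := \ell \cap H$, applies \autoref{custom:ind-high} \emph{inside} $H$ to produce $X'$ through the $d+2k$ points $p_1, \ldots, p_{d+2k-1}, q$ and meeting the $(d-2k)$-plane $\Lambda \cap H$ (exactly the numerics of \autoref{lemma:interpolation-numerics} in degree $d-1$), and takes $\Lambda'$ to be the span of $\ell$ with the unique ruling plane of $X'$ through $q$ (\autoref{lemma:linear-spaces-in-varieties-of-minimal-degree}). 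This is the trick that lets the $k$-plane pick up both leftover points while still containing a ruling plane.

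Second, your concluding step is wrong: incidence conditions are \emph{closed}, not open, so a nearby smooth scroll in a smoothing family of $X_0$ will generically move off the prescribed points and away from $\Lambda$. Smoothability of $X_0$ by itself proves nothing. The correct conclusion runs through showing that $X_0 = X' \cup \Lambda'$ is an \emph{isolated} point of the locus of elements of $\minhilb{d}{k}$ satisfying the specialized incidence conditions; since the incidence correspondence is proper and of the same dimension as the space of point--plane configurations, an isolated point in one fiber forces the projection to be dominant, which is interpolation (this is the equivalence cited as \cite[Theorem A.7(5)]{landesmanP:interpolation-problems:-del-pezzo-surfaces}). Establishing isolation is where the real work lies: one must show only finitely many broken scrolls in $\broken{d}{k}$ and only finitely many smooth scrolls pass through the specialized configuration. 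The former uses the hyperplane specialization again (any broken scroll through the configuration must have its degree $d-1$ component inside $H$, so induction applies), and the latter is the dimension count of \autoref{lemma:ind-high-smooth-finite}. Your closing paragraph gestures at a dimension check, but with the openness claim in place of the isolation argument, the proof does not close.
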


\begin{remark}
	\label{remark:castelnuovos-lemma}
	Before proving \autoref{lemma:high-induction}, let us note that it
	immediately gives another proof that there is a rational normal
	curve through $n + 3$ points in $\bp^n$.
	The proof holds by induction on the degree, which covers all
	cases when $k = 1$ because when $k = 1$, $2k - 1 = 1$.
	This is pictorially summarized in
	\autoref{figure:castelnuovos-lemma}.
\end{remark}

The idea of the proof of \autoref{lemma:high-induction} is to specialize all but two of the points to a hyperplane, and then find a reducible scroll of degree
$d$ which is the union of a scroll of degree $d-1$ in a hyperplane and $\bp^k$, meeting along $\bp^{k-1}$,
as pictured in \autoref{figure:high-induction}.

\begin{figure}
	\centering
	\includegraphics[scale=.35]{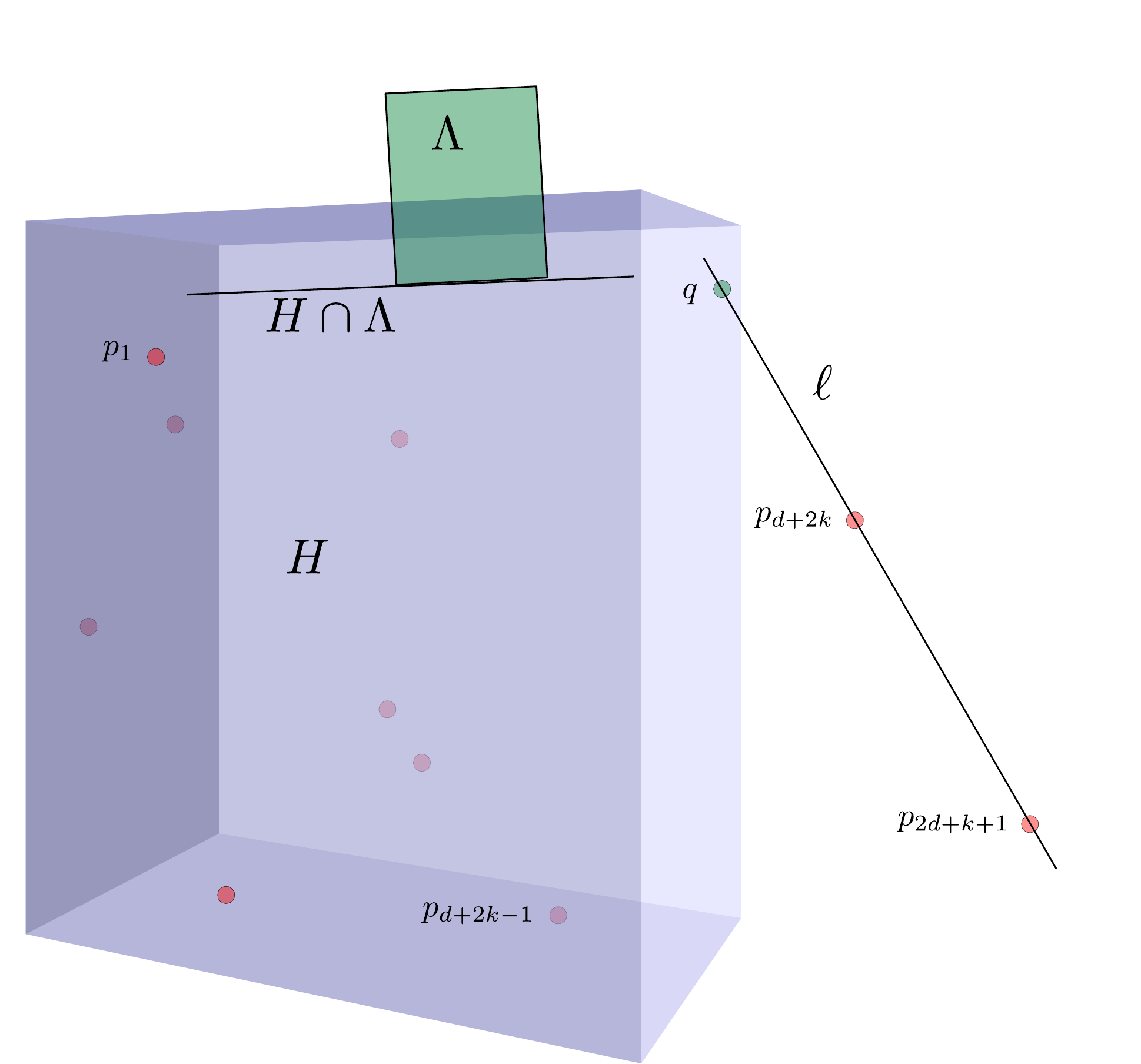}
	\caption{A visualization of the idea of the proof of \autoref{lemma:high-induction},
	where one inductively specializes all but two points to lie in a hyperplane.
}
	\label{figure:high-induction}
\end{figure}
\begin{figure}
	\centering
	\includegraphics[scale=.2]{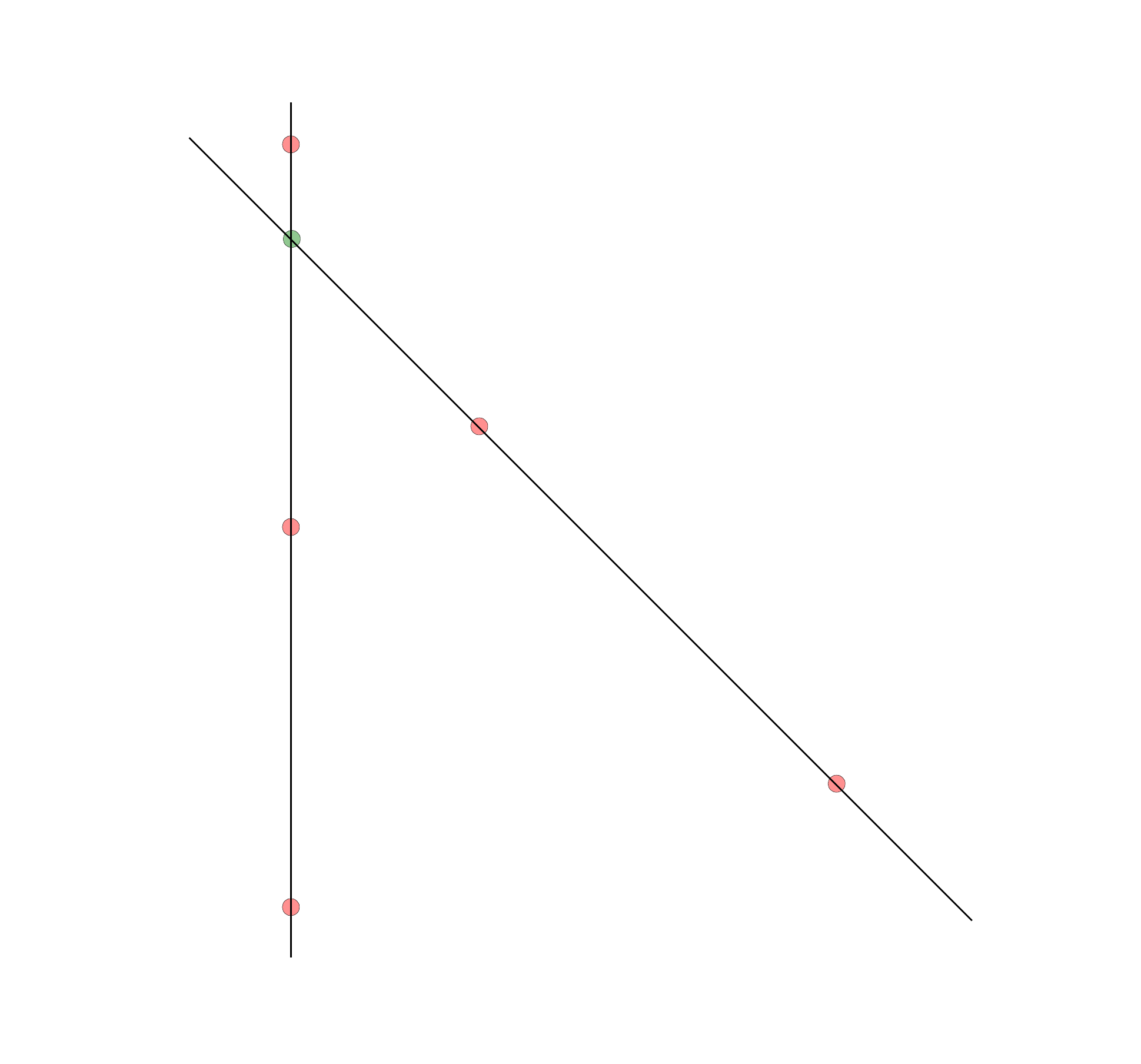}
	\includegraphics[scale=.2]{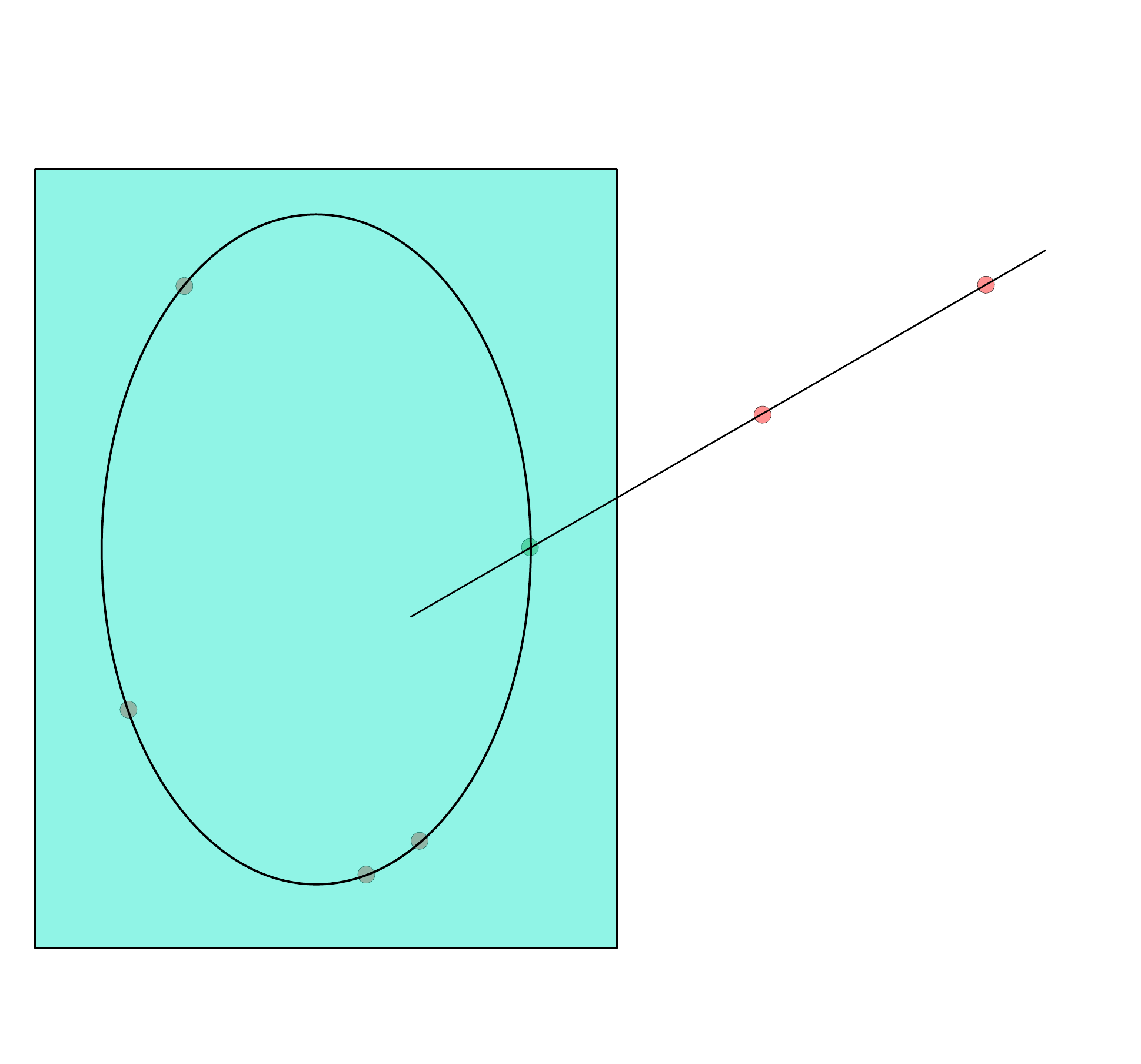}
	\includegraphics[scale=.2]{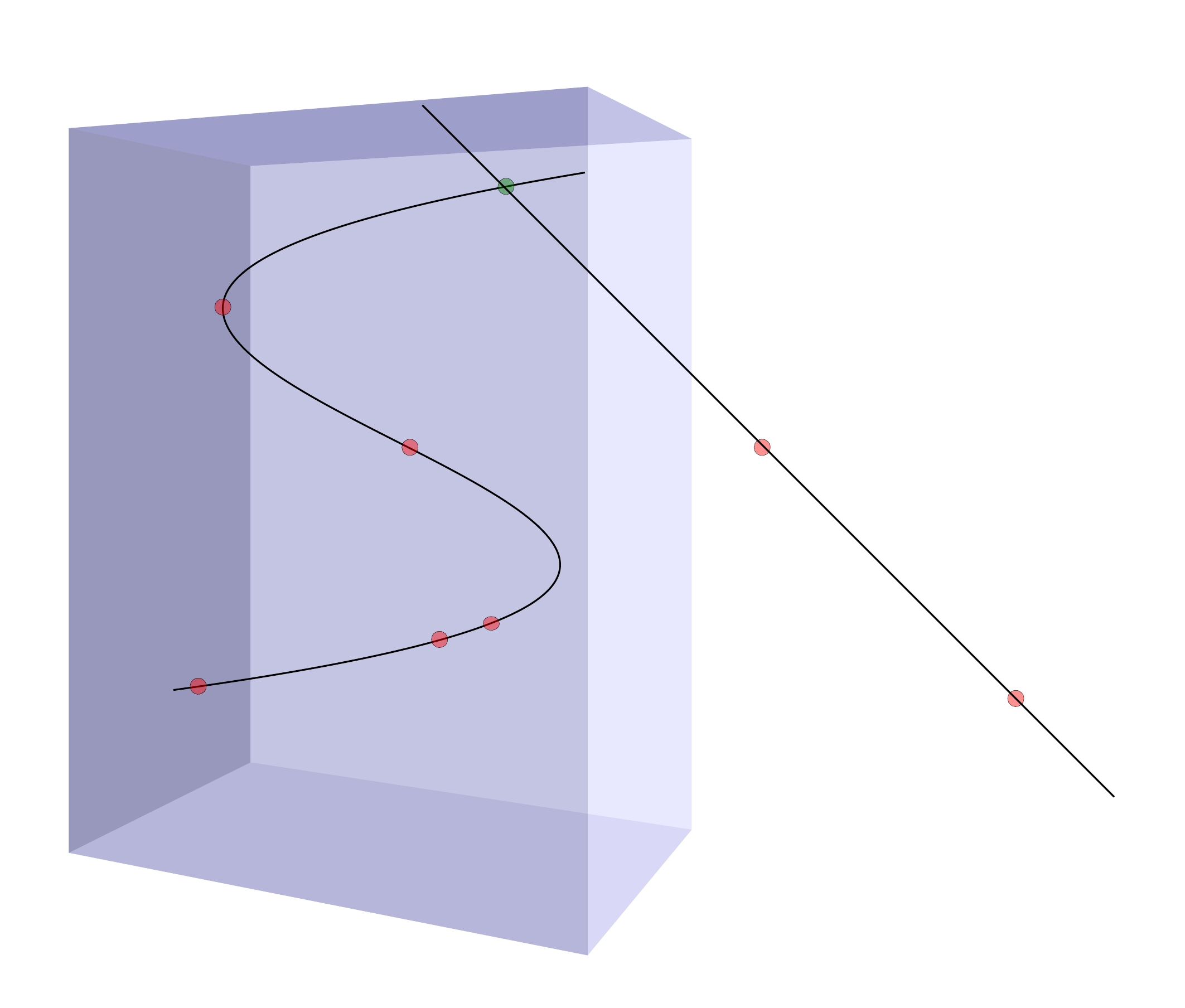}
	\caption{A pictorial description of the inductive degeneration following \autoref{lemma:high-induction} for showing rational normal curves
	satisfy interpolation, see also \autoref{remark:castelnuovos-lemma}. Degenerations are drawn for rational normal curves of degrees 2, 3, and 4.
}
\label{figure:castelnuovos-lemma}
\end{figure}

\begin{proof}
	By \autoref{lemma:interpolation-numerics}, $\minhilb d k$ satisfying interpolation means that given a collection of $d + 2k +1$ points and a general 
	$(d -2k + 1)$-plane, $\Lambda$, we can find 
	some $[Z] \in \minhilb d k$, so that $Z$ passes through
	the points and meets $\Lambda$. Now specialize $d + 2k - 1$ of the points $p_1, \ldots, p_{d+2k-1}$ to a general hyperplane $H \subset \bp^{d+k-1}$. Let $\ell$ be the line through $p_{d+2k}, p_{d+2k+1}$ and let $q := \ell \cap H$.
	By induction hypothesis ~\ref{custom:ind-high}, there is
	a degree $d-1$ dimension $k$ variety of minimal degree which contains the $(d-1) + 2k + 1$ points $p_1, \ldots, p_{d+2k-1}, q$ and meets
	the $((d-1)-2k+1)$-plane $H \cap \Lambda$. Call that variety $X$. Since $q \in X$, there is a unique $(k-1)$-plane
	contained in $X$ and containing $q$, as follows from \autoref{lemma:linear-spaces-in-varieties-of-minimal-degree}.
	Let that $k-1$-plane be $P$ and let $Y$ be the $k$-plane spanned by $\ell$ and $P$.
	Then, the variety $X \cup Y$ with reduced scheme structure lies in $\minhilb d k$ by \autoref{lemma:coskun-degenerations-of-scrolls}.
	
	In order to show $\minhilb d k$ satisfies interpolation, it suffices to show
	$X \cup Y$ is an isolated point in the set of all elements of $\minhilb d k$ containing $p_2, \ldots, p_{d + 2k - 1}$ and $\Lambda$,
by \cite[Thorem A.7(5)]{landesmanP:interpolation-problems:-del-pezzo-surfaces}.
	Since the points and plane were chosen generally subject to the requirement that $k+d$ of the points were contained
	in a hyperplane, by \autoref{lemma:coskun-degenerations-of-scrolls}, it suffices to show there are only finitely many
	scrolls in $\broken d k \cup \minhilbsmooth d k$
	(recall these were defined in \autoref{subsection:notation})  	
	containing $p_1, \ldots, p_{d - 2k+1},$ and meeting
	$\Lambda$.

	This now follows from a dimension count. First, we show there are only finitely many
	scrolls in $\broken d k$ containing $p_1, \ldots, p_{d - 2k+1}$ and meeting $\Lambda$. 
	Because of our specialization of the points, any $[X \cup Y] \in \broken d k$ with $Y \cong \bp^{k-1}$ 
	containing this set of points must satisfy
	$X \subset H$.
	By our induction hypothesis, there are a positive
	finite number of scrolls $X$ in $H$ meeting
	$p_1, \ldots, p_{d-2k-1}, q$, and $\Lambda \cap H$.
	Once $X$ is chosen, $Y$ is uniquely determined, so there
	are finitely many choices for $X \cup Y$.

	To complete the proof, it suffices to show there are only finitely many smooth scrolls containing
	$p_1, \ldots, p_{k+d+1}$ and meeting $\Lambda$.
	This follows from \autoref{lemma:ind-high-smooth-finite}, which we prove next.
\end{proof}

\begin{lemma}
	\label{lemma:ind-high-smooth-finite}
	Let $p_1, \ldots, p_{d+2k+1}$ be points in $\bp^n$ so that $p_1, \ldots, p_{d+2k-1}$ are contained in a hyperplane $H \subset \bp^n$,
	but the points are otherwise general, and let $\Lambda$ be a general $(d-2k+1)$ plane.
	Then, there are only finitely many smooth scrolls containing $p_1, \ldots, p_{d+2k+1}$ and meeting $\Lambda$.
\end{lemma}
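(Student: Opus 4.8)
The plan is to prove finiteness by a dimension count, exploiting that each of our constraints is imposed by a \emph{general} member of its family and so can only drop the dimension of the locus of smooth scrolls by at least its expected codimension. Write $n = d+k-1$, and recall that the smooth scrolls form a dense open subset $\minhilbsmooth d k \subseteq \minhilb d k$. By \autoref{proposition:smooth-minimal-degree-hilbert-scheme} and the numerology of \autoref{lemma:interpolation-numerics}, each point imposes $d-1$ conditions, meeting the general $(d-2k+1)$-plane $\Lambda$ imposes $2k-2$ conditions, and the total number of conditions coming from the $d+2k+1$ points together with $\Lambda$ is
\begin{equation*}
	(d+2k+1)(d-1) + (2k-2) = \dim \minhilb d k .
\end{equation*}
Hence it suffices to show that imposing these constraints successively drops the dimension by at least $d-1$ for each point and by at least $2k-2$ for $\Lambda$; the locus of smooth scrolls satisfying all of them then has dimension at most $0$, so is finite. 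We only need this upper bound: nonemptiness of the locus is neither claimed nor needed here, being supplied where relevant by the broken scrolls of $\broken d k$ in the proof of \autoref{lemma:high-induction}.

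I would first impose the condition of meeting $\Lambda$ on all of $\minhilbsmooth d k$. Smooth scrolls sweep out $\bp^n$, so the evaluation map from the universal scroll to $\bp^n$ is dominant; by Kleiman transversality for the transitive action of $\pgl_{n+1}$ on $\bp^n$, which is valid in arbitrary characteristic, a general translate $\Lambda$ of codimension $n - (d-2k+1) = 3k-2$ then meets the universal scroll properly, cutting the locus down by exactly $2k-2$. I would then impose the points one at a time. For a point $p$ general in $\bp^n$, consider the incidence correspondence $I = \{([X],x) : x \in X\} \subseteq \mathcal F \times \bp^n$ over the current family $\mathcal F$; it has dimension $\dim \mathcal F + k$, and its projection to $\bp^n$ exhibits the fibre over a general $p$, namely $\{[X] \in \mathcal F : p \in X\}$, as having dimension at most $\dim \mathcal F + k - n = \dim \mathcal F - (d-1)$, or else empty. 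This handles the two points $p_{d+2k}, p_{d+2k+1}$ lying off $H$.

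The subtler step, which I expect to be the main obstacle, is to show that the $d+2k-1$ points specialized to the general hyperplane $H$ each still impose at least $d-1$ conditions. For a point $q$ general in $H$, I would run the same argument with the incidence correspondence $I_H = \{([X],x) : x \in X \cap H\} \subseteq \mathcal F \times H$. The crucial input is that scrolls are nondegenerate, so no member of the family is contained in $H$; thus $X \cap H$ is a divisor on $X$ of pure dimension $k-1$ and $\dim I_H = \dim \mathcal F + (k-1)$. Projecting $I_H$ to $H$ and taking the fibre over the general $q$ shows that $\{[X] \in \mathcal F : q \in X\}$ has dimension at most $\dim \mathcal F + (k-1) - (n-1) = \dim \mathcal F - (d-1)$, or is empty, exactly as for a point general in $\bp^n$. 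What makes this work is that specializing to the hyperplane $H$ lowers both the dimension of the incidence correspondence and the dimension of the target by one, so the expected drop of $d-1$ survives the specialization, and it is precisely here that the generality of the points within $H$ is used.

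Combining these estimates and applying the fibre-dimension bounds on each irreducible component of the intermediate families, after imposing $\Lambda$ and all $d+2k+1$ points the locus of smooth scrolls has dimension at most
\begin{equation*}
	\dim \minhilb d k - (2k-2) - (d+2k+1)(d-1) = 0 .
\end{equation*}
Therefore only finitely many smooth scrolls contain $p_1, \ldots, p_{d+2k+1}$ and meet $\Lambda$, as claimed.
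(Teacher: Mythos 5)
Your proof is correct, but it is not the paper's official argument: it is a rigorization of the alternative count that the paper itself sketches in the remark immediately \emph{following} its proof of \autoref{lemma:ind-high-smooth-finite}, the one the paper says ``takes a little justification to rigorize.'' The paper's own proof instead builds a single global incidence correspondence $\Phi$, namely the fiber product over $\minhilb d k$ of $d+2k-1$ copies of $\uhilb X \times_{\bp^n} H$, two free copies of $\uhilb X$, and the correspondence of pairs consisting of a point of $X$ together with a $(d-2k+1)$-plane through it; it computes $\dim \Phi$ by summing relative dimensions over $\minhilb d k$, checks that the total equals $\dim H^{d+2k-1} \times (\bp^n)^2 \times G(d-2k+2, n+1)$, and concludes that the projection to the configuration space is generically finite or non-dominant on each component. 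The numerical content is identical to yours: your observation that nondegeneracy forces $X \cap H$ to be a divisor, so a point specialized into $H$ still imposes $d-1$ conditions, is exactly what the paper encodes as $\dim \left( \uhilb X \times_{\bp^n} H \right) - \dim \minhilb d k = k-1$ while the target shrinks from $\bp^n$ to $H$. The trade-off is one of bookkeeping: the paper's single correspondence handles all genericity at once via the generic-fiber-dimension theorem for one morphism, whereas your successive imposition of conditions needs the standard Chevalley-type argument that choosing each constraint general given the previous ones yields a dense (constructible, hence containing a dense open) set of configurations in the product; you gesture at this by working component by component on the intermediate families but do not spell it out. In exchange, your organization makes visible exactly where each hypothesis enters — the characteristic-free dimension part of Kleiman's theorem for $\Lambda$, nondegeneracy for the points in $H$ — and makes explicit that only the dimension upper bound is needed, nonemptiness being supplied by the broken scrolls in the proof of \autoref{lemma:high-induction}.
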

\begin{remark}
	\label{remark:}
	Although the following proof is a fairly standard dimension counting argument,
	we include it for completeness.
	Throughout the remainder of the paper, many similar dimension counts will be done, the formal details of which are omitted as they are
	similar in nature to this one.
\end{remark}

\begin{proof}
	For the remainder of the proof, fix a hyperplane $H \subset \bp^n$. Define the incidence correspondence
\begin{align*}
	\Phi := 
	\left( \uhilb X \times_{\hilb {p_1}} H \right) &\times_{\hilb X} \cdots \times_{\hilb X} \left( \uhilb X \times_{\hilb {p_{d+2k-1}}} H \right)
	\\
	&\times_{\hilb X} \uhilb X \times_{\hilb X} \uhilb X \times_{\hilb X} \left( \uhilb X \times_{\bp^n} \hilb \Lambda \right)
\end{align*}
To complete the proof, it suffices to show that 
\begin{align*}
	\dim \Phi &= \dim H^{d+2k-1} \times (\bp^n)^2 \times G(d -2k + 2, n+1).
\end{align*}
because then the projection map
\begin{align*}
	\Phi \ra (\bp^n)^{d+2k-1} \times (\bp^n)^2 \times G(d -2k + 2, n+1)
\end{align*}
is either generically finite or not dominant, and in either case, there
are only finitely many smooth varieties through a general set of such
points, meeting a general plane $\Lambda$.
\begin{align*}
\end{align*}
Note that
\begin{align*}
	\dim \uhilb X \times_{\bp^n} H - \dim \minhilb d k &= k-1 \\
	\dim \uhilb X - \dim \minhilb d k &= k \\
	\dim \uhilb X \times_{\bp^n} \hilb \Lambda - \dim \minhilb d k &= k+ (d-2k+1)(3k-2)
\end{align*}
This completes the proof because
\begin{align*}
	\dim \Phi &= ((d+k)^2 -k^2 - 3) \\
	 & \qquad + \left(  (k-1)(d+2k-1) + (2k) + (k +(d-2k+1)(3k-2))
\right) \\
&= (d+k-2)(d+2k-1)+ 2(d+k-1) \\
& \qquad +(d-2k+2)(n+2k-d-1) \\
&= \dim (\bp^n)^{d+2k-1} \times (\bp^n)^2 \times G(d -2k + 2, n+1).
\end{align*}
\end{proof}
\begin{remark}
	\label{remark:}
	To prove \autoref{lemma:ind-high-smooth-finite}, one can also
	do the following dimension count, although this takes
	a little justification to rigorize:
The condition that $X$ pass through a point imposes $n-k = d-1$ conditions. 
When we specialize a point to a hyperplane, the hyperplane intersects $X$ in a scroll of dimension $k-1$
in $\bp^{n-1}$. Hence, a point still imposes $n-1 - (k-1) = n - k$ conditions. 
Finally, the condition to meet a $(d - 2k + 1)$-plane imposes $n - k - (d-2k+1)$ conditions.
Adding these up, we obtain a total of
\begin{align*}
	(d+2k+1)(n-k) + n-k-(d-2k+1) &= (k+d)^2 - k^2 - 3 \\
	&= \dim \minhilb d k
\end{align*}
conditions, so there are only finitely many elements of
$\minhilb d k$ satisfying these conditions.
\end{remark}

\subsection[Degree between $k+1$ and $2k-1$]{Inductive degeneration for degree between $k+1$ and $2k-1$}

We next show \autoref{corollary:mid-induction},
which lets us verify scrolls of degree between $k+1$ and $2k-1$
satisfy interpolation, assuming \autoref{custom:ind-mid} inductively.

\begin{figure}
	\centering
	\includegraphics[scale=.3]{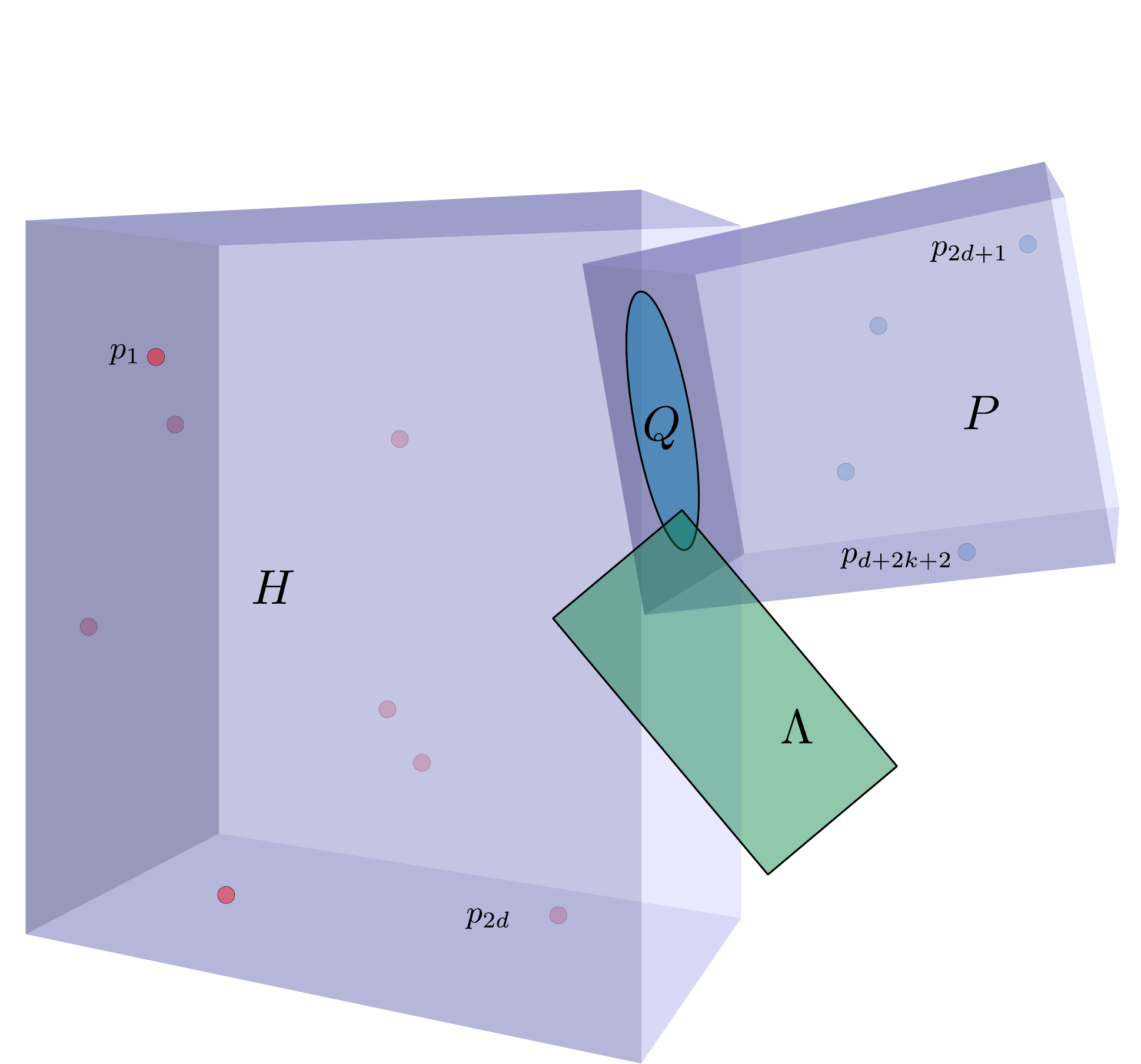}
	\caption{A visualization of the idea of the proof of \autoref{corollary:mid-induction},
	where one inductively specializes $2d$ points to lie in a hyperplane $H$, and the additional plane $\Lambda$
	to meet the intersection of $H$ and the span of the remaining $2k-d+2$ points.
}
	\label{figure:}
\end{figure}

\begin{proposition}
	\label{corollary:mid-induction}
	Let $k+1 \leq d \leq 2k-1$. Assuming induction hypothesis ~\ref{custom:ind-mid} holds for varieties of degree $d-1$ then $\minhilb {d} k$ satisfies interpolation.
\end{proposition}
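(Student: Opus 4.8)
The plan is to mirror the degeneration argument of \autoref{lemma:high-induction}, but now to peel the plane-incidence condition onto the planar component of a reducible scroll rather than peeling off points one hyperplane at a time. By \autoref{lemma:interpolation-numerics}, since $k+1 \le d \le 2k-1$, interpolation for $\minhilb d k$ amounts to finding a scroll through $d+2k+2$ general points and meeting a general $2(d-k)$-plane $\Lambda$ (when $d \le 2k-2$; the boundary $d = 2k-1$ uses the first-regime count and is treated identically). As in the proof of \autoref{lemma:high-induction}, by \cite[Theorem A.7(5)]{landesmanP:interpolation-problems:-del-pezzo-surfaces} it suffices to exhibit one reducible scroll through a specially chosen configuration of this numerical type and to show it is isolated among the scrolls in $\broken d k \cup \minhilbsmooth d k$ through that configuration, since \autoref{lemma:coskun-degenerations-of-scrolls} guarantees that every limit of smooth scrolls through the configuration lies in this locus.

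For existence I would degenerate as follows. Keep $p_1, \ldots, p_{2k-d+2}$ general and set $\Pi := \spn(p_1, \ldots, p_{2k-d+2})$, a $(2k-d+1)$-plane; note $2k-d+1 \le k$ exactly because $d \ge k+1$. Specialize the remaining $2d$ points into a general hyperplane $H \cong \bp^{(d-1)+k-1}$, and specialize $\Lambda$ so that it meets the $(2k-d)$-plane $\Pi \cap H$. Using the inductive hypothesis \ref{custom:ind-mid}, choose a degree $d-1$ scroll $X \subset H$ passing through the $2d$ specialized points and containing $\Pi \cap H$ inside one of its ruling $(k-1)$-planes $P$; the containment $\Pi \cap H \subseteq P$ is possible precisely because $2k-d \le k-1$, and the existence of such a ruling is read off from the Fano-scheme description in \autoref{lemma:linear-spaces-in-varieties-of-minimal-degree}. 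Now put $Y := \spn(\Pi, P)$. Since $\Pi \cap P = \Pi \cap H$ has dimension $2k-d$, a dimension count gives $\dim Y = (2k-d+1) + (k-1) - (2k-d) = k$, so $Y \cong \bp^k$; moreover $Y \supseteq \Pi$ contains the general points, and $Y \cap H = P$, so $Y$ meets $X$ exactly along the ruling $P$. Hence $X \cup Y \in \broken d k$ by \autoref{lemma:coskun-degenerations-of-scrolls}, it passes through all $d+2k+2$ points, and because $\Lambda$ was specialized to meet $\Pi \cap H \subseteq X$, the scroll $X \cup Y$ meets $\Lambda$ for free.

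It then remains to verify that $X \cup Y$ is isolated, i.e.\ that only finitely many scrolls in $\broken d k \cup \minhilbsmooth d k$ pass through the specialized points and meet $\Lambda$. For the broken locus, any such $[X' \cup Y']$ must have its degree $d-1$ component $X'$ inside $H$ (the $2d$ in-$H$ points force this), with $X'$ containing $\Pi \cap H$ and $Y'$ carrying the remaining general points; a dimension count against $\dim \minhilb {d-1} k$ from \autoref{proposition:smooth-minimal-degree-hilbert-scheme} bounds these by finitely many, with $Y'$ determined once $X'$ is. For the smooth locus, a dimension count entirely analogous to \autoref{lemma:ind-high-smooth-finite} --- replacing the two free points there by the free plane $\Lambda$ specialized onto $\Pi \cap H$ --- shows the associated incidence correspondence has the same dimension as the product of parameter spaces of the configuration, so the relevant projection is generically finite or nondominant. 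Combining the two counts shows $X \cup Y$ is isolated, which yields interpolation.

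The hard part will be the exact numerical bookkeeping in the last paragraph: one must check that transferring the plane-incidence condition on $\Lambda$ onto the containment $\Pi \cap H \subseteq X$ matches, to the dimension, what hypothesis \ref{custom:ind-mid} supplies --- in particular reconciling the dimension of the plane $\Pi \cap H$ that $X$ must contain with the plane dimension appearing in \ref{custom:ind-mid}, together with the resulting count of points imposed on $X$ --- so that the incidence correspondence has precisely the expected dimension. The geometric construction is essentially forced; it is the verification that the conditions are \emph{transverse}, equivalently that $X \cup Y$ is a reduced isolated point rather than a member of a positive-dimensional family, that requires care, and this is exactly where the argument reuses the dimension-counting template of \autoref{lemma:ind-high-smooth-finite}.
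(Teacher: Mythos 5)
Your construction coincides with the paper's own proof: the paper likewise specializes $2d$ of the points to a general hyperplane $H$, lets the remaining $2k-d+2$ points span a $(2k-d+1)$-plane (your $\Pi$, the paper's $P$), specializes $\Lambda$ to meet $Q := \Pi \cap H$, invokes hypothesis \ref{custom:ind-mid} to get a degree $d-1$ component $X \subset H$ through the $2d$ points and containing $Q$, attaches the $k$-plane $Y$ spanned by $\Pi$ and a ruling $(k-1)$-plane of $X$ containing $Q$, and finishes with finiteness counts over $\broken d k \cup \minhilbsmooth d k$ deferred to the template of \autoref{lemma:ind-high-smooth-finite}. You also correctly single out the numerical bookkeeping (including the off-by-one reconciliation between $\dim Q = 2k-d$ and the plane dimension in \ref{custom:ind-mid}) as the part needing care. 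However, two steps you treat as automatic are precisely where the paper does additional work, and as written they are gaps.

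First, hypothesis \ref{custom:ind-mid} supplies only a point of $\minhilb {d-1} k$, i.e.\ some member of that Hilbert-scheme component, not a priori a \emph{smooth} scroll; your phrase ``choose a degree $d-1$ scroll $X \subset H$ \ldots containing $\Pi \cap H$ inside one of its ruling $(k-1)$-planes'' silently upgrades it. Both the Fano-scheme description of \autoref{lemma:linear-spaces-in-varieties-of-minimal-degree} (which you need to produce the ruling plane) and the membership $X \cup Y \in \broken d k$ require $X$ smooth. The paper closes this with a density argument: forming the incidence correspondence $\Phi$ of (scroll, contained plane, $2d$ points) with projections $\pi_1$ to $\minhilb {d-1} k$ and $\pi_2$ to the configuration space, dominance of $\pi_2$ (which is what \ref{custom:ind-mid} asserts) persists after restricting to the dense open locus $\pi_1^{-1}\left(\minhilbsmooth {d-1} k\right)$, so $X$ may indeed be taken smooth. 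Second, at the boundary $d = 2k-1$ the plane $Q$ is a \emph{line}, and on a scroll whose type contains $j \geq 1$ factors of $\sco(1)$ the Fano scheme of lines has a second irreducible component, the lines of type \ref{custom:line-small}, which lie in no ruling $(k-1)$-plane; so ``the existence of such a ruling is read off from the Fano-scheme description'' is false verbatim in this case. The paper's repair is that the component of lines of type \ref{custom:line-large} has strictly larger dimension, so for a general configuration $Q$ lands in it. Neither repair changes your architecture --- your proposal is the paper's argument --- but both are needed before $Y$ can be constructed.
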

\begin{proof}
	By \autoref{lemma:interpolation-numerics}, we would like to show there is a degree $d-1$, dimension $k$ variety passing through
	$d + 2k + 2$ points, $p_1, \ldots, p_{d+2k+2}$, and meeting a general $2(d-k)$ plane $\Lambda$. 
	Choose a general hyperplane $H$ and specialize
	$2d$ of these points, $p_{1}, \ldots, p_{2d}$, to $H$.
	Now, the last $2k-d+2$ points remain general, and span some $2k-d+1$ plane $P$.
	Specialize $\Lambda$ so that $\Lambda \cap H \cap P \neq \emptyset$, but so that $\Lambda$ is otherwise general. Let $Q := H \cap P$.

	Then, by induction hypothesis
	~\ref{custom:ind-mid} for varieties of degree $d$, there is a variety of degree $d-1$ containing the $2d$ points
	$p_1, \ldots, p_{2d}$ and containing $Q$. Call this variety $X$. 

	We now explain in detail why the variety $X$ can be taken to be smooth if $p_1, \ldots, p_{2d},Q$
	are chosen generally. Let $W$ be some scroll of degree $d-1$ and dimension $k$, so that $\hilb W = \minhilb {d-1} k$,
	and let $F$ denote the relative Hilbert scheme of $(2k-d)$-planes in $\uhilb W$ over $\hilb W$.
	Define 
	\begin{align*}
	\Phi := F \times_{\hilb W} \uhilb W \times_{\hilb W} \cdots \times_{\hilb W} \uhilb W,
	\end{align*}
	where there are $2d$ copies of $\uhilb W$ in the fiber product.
	Note that the closed points of $\Phi$ correspond to a scroll together with a $(2k-d)$-plane contained in it
	and $2d$ points on it.
	Then, we have natural projection maps
	\begin{equation}
		\nonumber
		\begin{tikzcd}
			\qquad & \Phi \ar {ld}{\pi_1} \ar {rd}{\pi_2} & \\
			\hilb W && G(2k-d+1, n+1) \times \left( \bp^n \right)^{2d}.
		\end{tikzcd}\end{equation}
	The assumption that through a general set of $2d$ points
	and a general $(2k-d)$-plane $Q$ there passes a member of $\hilb W$
	is equivalent to the map $\pi_2$ being dominant.
	Now, since the locus of smooth scrolls $\minhilbsmooth {d-1} k \subset \hilb W$ is dense, 
	it follows that $\pi_1^{-1}\left(\minhilbsmooth {d-1} k \right) \subset \Phi$ is also dense,
	and therefore $\pi_2|_{\pi_1^{-1}\left(\minhilbsmooth {d-1} k\right)}$ is also dominant.
	Dominance is equivalent to the statement that for a general set of $2d$ points and a $2k-d$ plane,
	there is some {\em smooth} scroll passing through the points and containing the plane, as we wanted to show.

	Further, if the points are chosen generally, there is some $(k-1)$-plane $R$ with $Q \subset R \subset X$,
	as we now explain.
	Note, by \autoref{lemma:linear-spaces-in-varieties-of-minimal-degree}, if $\dim Q \neq 1$, then we 
	have that $Q$ is contained in some $(k-1)$-plane. If, instead $\dim Q = 1$, then we know there are at most two components of the Fano scheme of lines
	contained in $X$, and one of these has dimension $k$, which is strictly larger than 
	the other component (if it exists), from \autoref{lemma:linear-spaces-in-varieties-of-minimal-degree}.
	Therefore, since	
	$p_1, \ldots, p_{d+2k+2}$ and $\Lambda$ are chosen generally,
	we may assume that this line $Q$ lies in the component of the Fano scheme of larger dimension. In this case,
	the line $Q$ lies in some $(k-1)$-plane, again by \autoref{lemma:linear-spaces-in-varieties-of-minimal-degree}.

	Define $Y$ to be the plane spanned by $R, p_{2d+1}, \ldots, p_{d+2k+2}$.
	Then, we then obtain that $X \cup Y$ corresponds 
	to a point in $\minhilb d k$ by \autoref{lemma:coskun-degenerations-of-scrolls}.

	Using dimension counts similar to, but more involved than \autoref{lemma:ind-high-smooth-finite}
there are only finitely many such smooth scrolls,
and finitely many such scrolls in $\broken d k$.
By \autoref{lemma:coskun-degenerations-of-scrolls}, $X \cup Y$ 
is an isolated point of the set of schemes containing the points and meeting $\Lambda$, and so it satisfies interpolation by \cite[Theorem A.7]{landesmanP:interpolation-problems:-del-pezzo-surfaces}.
\end{proof}

\subsection{Inductively verifying \getrefnumber{custom:ind-mid}}

In this subsection, we prove \autoref{lemma:middle-induction}, showing \autoref{custom:ind-mid} holds for a given degree
$d$, assuming it holds for degree $d-1$.
The proof of the \autoref{lemma:middle-induction} is quite analogous to that of \autoref{lemma:high-induction}, in that we specialize all but two of the points to
a hyperplane, and then find a scroll of degree d which is a union of a scroll of degree $d-1$ inside a hyperplane and a $\bp^k$, meeting along a $\bp^{k-1}$. We now prove \autoref{lemma:middle-induction} assuming \autoref{proposition:at-most-two-components}.
\begin{figure}
	\centering
	\includegraphics[scale=.3]{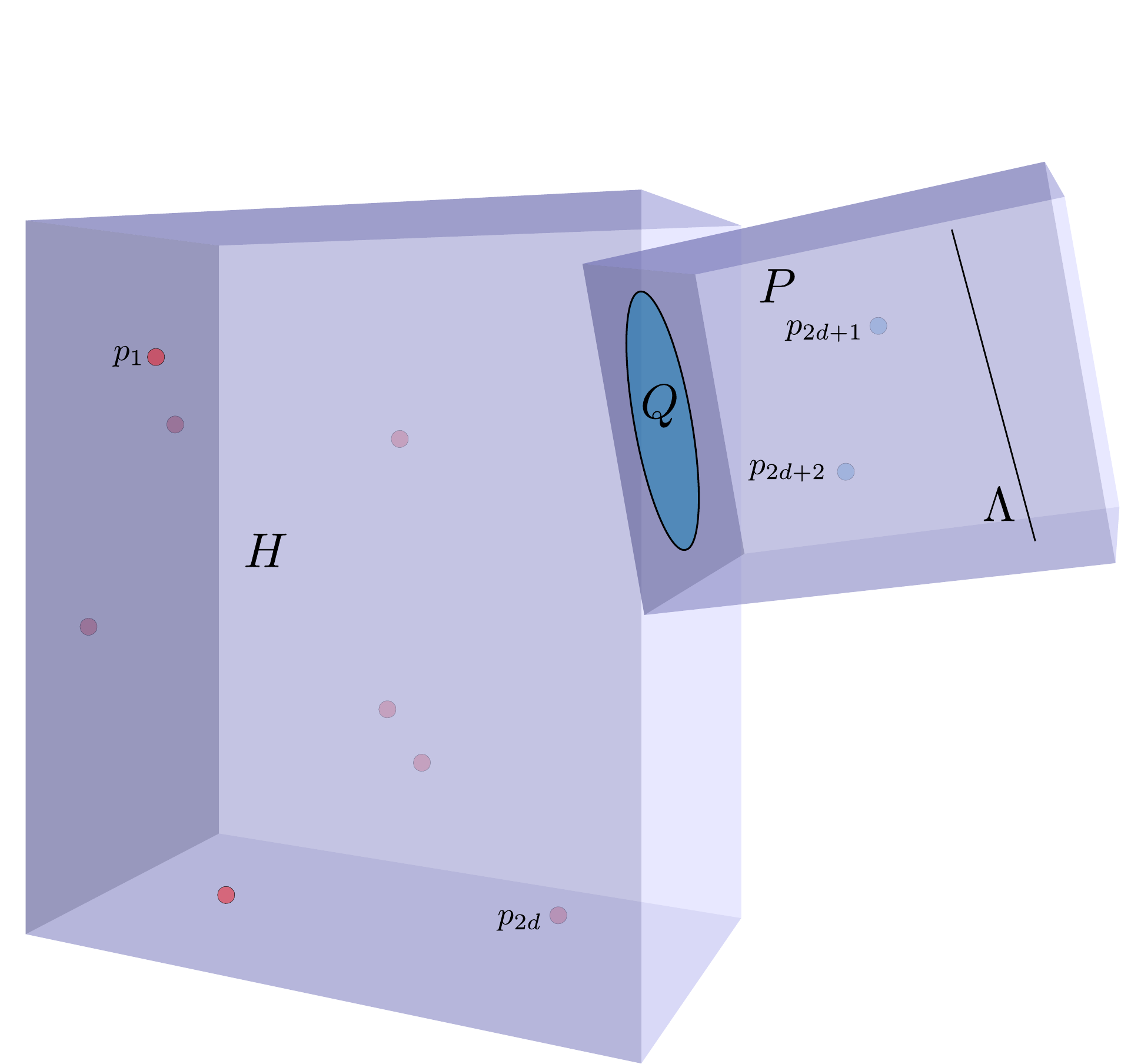}
	\caption{A visualization of the idea of the proof of \autoref{lemma:middle-induction},
	where one inductively specializes $2d$ points to lie in a hyperplane.
}
	\label{figure:}
\end{figure}

\begin{proposition}
	\label{lemma:middle-induction}
	Assuming induction hypothesis \autoref{custom:ind-mid}
	holds for varieties of degree $d-1$ with $k+1 \leq d \leq 2k-2$, 
	it holds for varieties of degree $d$.
\end{proposition}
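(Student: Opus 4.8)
The plan is to establish \autoref{custom:ind-mid} for degree $d$, namely that some member of $\minhilb{d}{k}$ contains a general $(2k-d-2)$-plane $\Lambda$ together with $2d+2$ general points, by running the degeneration of \autoref{lemma:high-induction} while tracking a \emph{contained} plane rather than an incidence with one. First I would specialize $2d$ of the points, say $p_1, \dots, p_{2d}$, to a general hyperplane $H \cong \bp^{d+k-2} \subset \bp^n$, leaving $p_{2d+1}, p_{2d+2}$ general, and set $\ell := \spn(p_{2d+1}, p_{2d+2})$ and $q := \ell \cap H$. I would also specialize $\Lambda$ so that $\Lambda \cap H$ lies in a single fiber $(k-1)$-plane and the remaining direction of $\Lambda$ runs along $\ell$; the point of this is to force $\Lambda$ into the $k$-plane $Y$ produced below, at the cost of making the configuration no longer general.

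Applying the inductive hypothesis \autoref{custom:ind-mid} for degree $d-1$, I obtain a degree $d-1$, dimension $k$ scroll $X \subset H$ through $p_1, \dots, p_{2d}$ and containing a $(2k-d-1)$-plane $Q$; as in the smoothness argument of \autoref{corollary:mid-induction}, $X$ may be taken smooth. Since $2k-d-1 < k-1$ because $d > k$, \autoref{lemma:linear-spaces-in-varieties-of-minimal-degree} shows that $Q$ lies in a unique fiber $(k-1)$-plane $R$ of $\pi : X \ra \bp^1$; when $d = 2k-2$ the plane $Q$ is a line, and one must, exactly as in \autoref{corollary:mid-induction}, ensure it lies in the larger fiber-type component of the Fano scheme so that such an $R$ exists. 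I would arrange $q \in R$ and $\Lambda \cap H \subset R$, and then set $Y := \spn(R, \ell)$. Because $\ell$ meets $R$ in the single point $q$, $Y$ is a $k$-plane; it contains $p_{2d+1}, p_{2d+2}$ and, by the specialization of $\Lambda$, contains $\Lambda$. By \autoref{lemma:coskun-degenerations-of-scrolls} the reduced union $X \cup Y$ is a point of $\minhilb{d}{k}$, and it contains all of the required data, settling the claim for this special configuration.

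The main obstacle is passing from this special configuration to a general one, since \autoref{custom:ind-mid} asks for a general plane and general points, whereas the construction forces $Q$ through $q$ and to contain $\Lambda \cap H$. To handle this I would form the incidence correspondence parametrizing a degree $d$ scroll together with $2d+2$ points it contains and a $(2k-d-2)$-plane it contains, and carry out a dimension count, in the spirit of \autoref{lemma:ind-high-smooth-finite} but more involved, showing that this correspondence has the same dimension as $G(2k-d-1, n+1) \times (\bp^n)^{2d+2}$. Verifying that the degenerate scroll $X \cup Y$ is isolated among scrolls through the special configuration then forces the projection to dominate, producing a scroll for the general configuration. The delicate input is controlling the boundary contributions from $\broken{d}{k}$ and, when $d = 2k-2$, the two families of lines of \autoref{lemma:linear-spaces-in-varieties-of-minimal-degree}; this is precisely where I would invoke \autoref{proposition:at-most-two-components} to bound the number of components and confirm that only finitely many degenerate scrolls pass through the special configuration, completing the count and hence the induction.
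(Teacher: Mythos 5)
Your construction follows the same route as the paper's proof: specialize $2d$ points into a hyperplane $H$, invoke \autoref{custom:ind-mid} for degree $d-1$ inside $H$, attach a $k$-plane $Y$ along a ruling $(k-1)$-plane to form a broken scroll in $\broken d k$, and finish with a dimension count, an isolatedness claim, and properness of $\pi_2$. But there is a genuine gap, coming from an off-by-one in the plane dimensions, and it breaks your final step. You track a $(2k-d-2)$-plane for degree $d$ and a $(2k-d-1)$-plane $Q$ for degree $d-1$, following the literal wording of \autoref{custom:ind-mid}; the version the paper actually runs (and the only one for which the numbers close) is one dimension higher: degree $e$ scrolls through $2e+2$ points containing a $(2k-e-1)$-plane, so here the hypothesis furnishes a $(2k-d)$-plane and the conclusion concerns a $(2k-d-1)$-plane. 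This shift is forced by the rest of the induction: \autoref{corollary:mid-induction} consumes a $(2k-d)$-plane, and \autoref{proposition:at-most-two-components} is stated for $(2k-d-1)$-planes with target $(\bp^n)^{2d+2} \times G(2k-d,n+1)$. With your dimensions, the claim that the incidence correspondence has the same dimension as $(\bp^n)^{2d+2} \times G(2k-d-1,n+1)$ is false: redoing the count of \autoref{proposition:at-most-two-components} with the smaller Grassmannian bundle shows the main component exceeds the target by exactly $d$. Worse, your isolatedness claim is then provably false rather than merely unverified: the plane fed to the inductive hypothesis need only contain $\spn(\Lambda \cap H, q)$, which has dimension $2k-d-2$, one less than $\dim Q$; letting $Q$ vary among $(2k-d-1)$-planes through this span (a family of dimension $2d-k-1 > 0$) produces a positive-dimensional family of broken scrolls $X \cup Y$ through your fixed special configuration. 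So no point of the relevant fiber is isolated, and the argument ``equal dimensions plus isolated point plus properness gives surjectivity'' cannot get started.

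There is a second, related defect: your specialization of $\Lambda$ is circular, and the incidences you need are never actually secured. You specialize $\Lambda$ so that ``$\Lambda \cap H$ lies in a single fiber $(k-1)$-plane,'' but the scroll $X$ --- and hence its fibers --- exists only after the inductive hypothesis is applied to the already-specialized configuration. Likewise, ``I would arrange $q \in R$ and $\Lambda \cap H \subset R$'' is not something you may arrange post hoc: $R$ is the unique ruling plane through $Q$, determined by $X$ and $Q$, and the inductive hypothesis gives no control over $X$ beyond the incidences imposed on $Q$ itself. The incidences must therefore be built into $Q$. This is exactly what the paper does, and with the shifted indexing it costs nothing: placing $\Lambda$ inside $H$ and setting $P := \spn(\Lambda, p_{2d+1}, p_{2d+2})$ and $Q := P \cap H = \spn(\Lambda, q)$, the plane $Q$ has exactly the dimension $2k-d$ that the hypothesis provides, contains both $\Lambda$ and $q$ automatically, and $Y := \spn(S,P)$ then contains $\Lambda$ and both outside points. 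With that repair --- the plane dimensions raised by one and $Q$ defined as above so that no ``arranging'' is needed --- your outline becomes the paper's proof.
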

\begin{proof}[Proof assuming \autoref{proposition:at-most-two-components}]
	We may assume $k \geq 3$ since the case of $k = 1$ is rational normal curves
	and the case $k = 2$ is already covered in ~\cite[Example, p.\ 2]{coskun:degenerations-of-surface-scrolls}.
	To show $\minhilb d k$ satisfies interpolation, we want to show it contains a point corresponding to a variety
	passing through
	$2d+2$ general points $p_1, \ldots, p_{2d+2}$ and a general $(2k-d-1)$-plane $\Lambda$. 

	Choose a general hyperplane $H \subset \bp^{d + k - 1}$ and specialize $\Lambda$ and $p_1, \ldots, p_{2d}$ to be contained in $H$.
	Let $P$ be the $(2k-d+1)$-plane spanned by $\Lambda, p_{2d+1}$ and $p_{2d+2}$, and let $Q := P \cap H$. Then, by inductive hypothesis
	~\ref{custom:ind-mid}, there is a scroll of degree $d - 1$ and dimension $k$, call it $X$,
	containing the $2d$ points $p_1, \ldots, p_{2d}$ and the $2k-d$ plane $Q$.

	There is a unique $(k-1)$-plane $S$ contained in $X$ and containing $Q$, by
	\autoref{lemma:linear-spaces-in-varieties-of-minimal-degree}. 			
	Take $Y$ to be the span of $\Lambda, p_{2d+1}, p_{2d+2},$ and $S$.
	The variety $X \cup Y$ with reduced scheme structure lies in $\broken d k \subset \minhilb d k$
	and contains $p_1, \ldots, p_{2d+2}$, and $\Lambda$,
	as desired.
	Note that the domain and range of the map $\pi_2: \Phi \ra (\bp^n)^{2d+2}\times G(2k-d, n+1)$, as defined in \autoref{proposition:at-most-two-components}, have the same dimension by \autoref{proposition:at-most-two-components}.
	Furthermore, by \autoref{proposition:at-most-two-components},
	when $k > 2$, $\Phi$ has a unique component of maximal dimension,
	call it $\Phi_1$
	and possibly one other component of lower dimension, call it $\Phi_2$, if it exists.
	We may therefore choose
	$(p_1, \ldots, p_{2d+2}, \Lambda)$ generally to avoid
	$\pi_2(\Phi_2)$.

	By a dimension count analogous to \autoref{lemma:ind-high-smooth-finite}, 
	there are only finitely many smooth scrolls and finitely many scrolls in $\broken d k$
	containing $p_1, \ldots, p_{d+2}, \Lambda$.

	So, applying \autoref{lemma:coskun-degenerations-of-scrolls}, to the map $\pi_2|_{\Phi_1}$ we see that 
	the point $[X \cup Y]$ constructed above is isolated among all schemes containing
	$p_1, \ldots, p_{2d+2}, \Lambda$.
	Since $\pi_2$ is a map between proper schemes of the same dimension and has a point isolated
	in its fiber, it is surjective, meaning that 
	\ref{custom:ind-mid} holds for scrolls of degree $d$ and dimension $k$.	
\end{proof}

To complete the proof of \autoref{lemma:middle-induction}, we only need prove \autoref{proposition:at-most-two-components}.

	\begin{lemma}
		\label{proposition:at-most-two-components}
		Let $X \in \minhilb d k$ and let
	$F$ denote the relative Hilbert scheme of $(2k-d-1)$-planes in $\uhilb X$ over $\hilb X$.
	Define
	\begin{align*}
\Phi = \uhilb X \times_{\hilb X} \cdots \times_{\hilb X} \uhilb X \times_{\hilb X} F.
	\end{align*}
	where there are $2d + 2$ copies of $\uhilb X$.
	Define
	\begin{equation}
		\nonumber
		\begin{tikzcd}
			\qquad & \Phi \ar {ld}{\pi_1} \ar {rd}{\pi_2} & \\
			 \minhilb d k  && (\bp^n)^{2d+2} \times G(2k-d, n+1).
		 \end{tikzcd}\end{equation}

	Then, $\Phi$ is irreducible if $2k-d-1 \neq 1$. When $2k -d -1 = 1$, it has at most two components.
		If it has two components, one is of dimension
		\begin{align*}
			\dim (\bp^n)^{2d+2} \times G(2k-d, n+1) &= (k+d-1)(2d+2) + (2k-d)(2d-k)
		\end{align*}
		and the other is of dimension 
		\begin{align*}
			(k+d-1)(2d+2) + (2k-d)(2d-k) - 2(k-2).
		\end{align*}
	\end{lemma}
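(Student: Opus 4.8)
The plan is to count the irreducible components of $\Phi$ by pushing the computation down to the relative Fano scheme $F$ and invoking the explicit description of Fano schemes of scrolls in \autoref{lemma:linear-spaces-in-varieties-of-minimal-degree}. First I would dispose of the passage from $\Phi$ to $F$: the projection $\Phi \ra F$ is the pullback along $F \ra \hilb X$ of the family $\uhilb X \times_{\hilb X} \cdots \times_{\hilb X} \uhilb X \ra \hilb X$, whose fibre over $[Y]$ is $Y^{2d+2}$. Over the dense locus where $Y$ is irreducible (which contains every smooth scroll) these fibres are irreducible of constant dimension $(2d+2)k$, so the components of $\Phi$ dominating this locus are in bijection with those of $F$ and satisfy $\dim \Phi = \dim F + (2d+2)k$; boundary contributions over the reducible-scroll locus are of strictly smaller dimension. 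Thus it suffices to analyze $F$.

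Set $t := 2k-d-1$, so that $F$ parameterizes $t$-planes; in the range $k+1 \leq d \leq 2k-2$ we have $1 \leq t \leq k-2$, and the general member of $\hilb X$ is the balanced scroll $\scroll{2^{d-k}, 1^{2k-d}}$, for which the number of $1$'s is $j = 2k-d \geq 2$. By \autoref{lemma:linear-spaces-in-varieties-of-minimal-degree}, when $t \neq 1$ the Fano scheme of $t$-planes in \emph{any} smooth scroll is exactly the single component of type \autoref{custom:line-large}, namely the Grassmannian bundle $\gbundle{t+1}{\sce}$ of $t$-planes inside the ruling $(k-1)$-planes. Globally this is realized as the relative Grassmann bundle $\gbundle{t+1}{\sce}$ of the rank $k$ bundle over the universal $\bp^1$ $\scc \ra \hilb X$; since $\scc$ is a $\bp^1$-bundle over the irreducible variety $\hilb X$ it is irreducible, and a Grassmann bundle over an irreducible base is irreducible. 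Hence $F$, and therefore $\Phi$, is irreducible when $t \neq 1$, with no boundary analysis needed because the fibre of $F \ra \hilb X$ has constant dimension.

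When $t = 1$ (so $d = 2k-2$) the Fano scheme of lines acquires the second component of type \autoref{custom:line-small}, and I would write $F = F^{(2)} \cup F^{(1)}$. Here $F^{(2)}$ is the relative Grassmann bundle $\gbundle{2}{\sce}$ of lines lying in ruling planes, irreducible as above of relative dimension $2(k-2)+1 = 2k-3$, while $F^{(1)}$ is the locus of type \autoref{custom:line-small} lines, which over the balanced locus is a $\bp^{j-1} = \bp^1$-bundle of dimension $\dim \hilb X + 1$. Since type \autoref{custom:line-small} lines are never contained in a ruling, $F^{(1)} \not\subset F^{(2)}$, so the two give genuinely distinct components. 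The remaining and main obstacle is to show $F^{(1)}$ is itself irreducible, i.e. that the non-balanced strata, over scrolls $\scroll{\ldots,1^{j'}}$ with $j' > j$ ones (whose type \autoref{custom:line-small} locus is a larger $\bp^{j'-1}$), contribute no extra components. I would control this by stratifying $\hilb X$ by scroll type and computing the codimension of the stratum of type $\underline a$ as $\dim \End(\sce_{\underline a}) - k^2$, using that the balanced bundle has $\dim \End = k^2$ (the same computation recovers $\dim \minhilb d k$ in \autoref{proposition:smooth-minimal-degree-hilbert-scheme}). The resulting inequality $\dim \End(\sce_{\underline a}) \geq k^2 + j' - 2$ shows each such stratum of $F^{(1)}$ has dimension at most $\dim \hilb X + 1$, with equality only on the dense balanced stratum; together with a limit argument identifying these strata as specializations of balanced type \autoref{custom:line-small} lines, this gives that $F^{(1)}$ is irreducible and $F$ has exactly two components.

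Finally the dimension count is routine. Using $\dim \Phi^{(i)} = \dim F^{(i)} + (2d+2)k$, together with $\dim F^{(2)} = \dim \hilb X + (2k-3)$, $\dim F^{(1)} = \dim \hilb X + 1$, and $\dim \hilb X = (d+k)^2 - k^2 - 3$ from \autoref{proposition:smooth-minimal-degree-hilbert-scheme} with $d = 2k-2$, I would substitute and simplify to obtain $\dim \Phi^{(2)} = (k+d-1)(2d+2) + (2k-d)(2d-k)$, which equals $\dim (\bp^n)^{2d+2} \times G(2k-d, n+1)$, while $\dim \Phi^{(1)} = \dim \Phi^{(2)} - 2(k-2)$. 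This identifies the larger type \autoref{custom:line-large} component as the one matching the target dimension and the smaller type \autoref{custom:line-small} component as the one of dimension $2(k-2)$ less, completing the proof.
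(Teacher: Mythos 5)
You take the same route as the paper's proof: factor $\Phi \ra F$ (the $2d+2$ copies of $\uhilb X$ contribute irreducible fibers of constant dimension $(2d+2)k$), identify the components of $F$ via \autoref{lemma:linear-spaces-in-varieties-of-minimal-degree}, and compute dimensions relative to $\hilb X$; your final formulas agree with the paper's. You are in fact more careful than the paper on one real issue: the paper's proof reads the component dimensions of $F$ off the Fano scheme of a \emph{single} smooth scroll, i.e.\ it treats the type~\ref{custom:line-small} locus as if its relative dimension were everywhere its generic value $j-1=1$, whereas you notice that the fiber jumps to $\bp^{j'-1}$ over non-balanced scrolls and you control this with the codimension bound $\dim\End(\sce_{\underline a}) - k^2 \ge j'-2$, which is correct (indeed $h^1$ of the endomorphism sheaf is at least $j'$ once $j' \ge 3$, giving the needed strictness). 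This addition is genuinely required for the statement as written, not a pedantic refinement.

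That said, two of your assertions carry real weight without proof, and one of them fails if taken at face value. The claim that ``boundary contributions over the reducible-scroll locus are of strictly smaller dimension'' is not true as a bare dimension count in the compactification the paper declares it works in (\autoref{remark:grassmannian-compactification}): there the locus of broken scrolls $X' \cup Y$ with $Y \cong \bp^k$ is a \emph{divisor}, and if $F$ is literally the relative Hilbert scheme of $t$-planes in the family of swept-out subschemes, its fiber over such a point contains every $t$-plane in $Y$, a $G(t+1,k+1)$ of dimension $(t+1)(k-t)$. That locus has dimension at least $\dim\hilb X - 1 + (t+1)(k-t)$, exceeding your main component $\dim\hilb X + (t+1)(k-t-1)+1$ by $t-1$; so for $t = 2k-d-1 \ge 2$ (i.e.\ $d \le 2k-3$, which occurs for $k \ge 4$) it cannot lie in the closure of the main component and would constitute a \emph{larger} extra component, destroying irreducibility. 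The statement survives only if one either works in the Hilbert-scheme compactification, where the broken locus has codimension $k > t$ (but then cones and degenerate limits with embedded points remain uncontrolled), or defines $F$ over the boundary through the limit curve in the Grassmannian, so that only planes lying in rulings of the limit components count; your dimension claim by itself does not settle this. Second, as you yourself flag, ``at most two components'' needs the limit argument showing that type~\ref{custom:line-small} lines on non-balanced scrolls are specializations of such lines on balanced ones; a dimension bound cannot rule out a small separate component, and that argument is sketched, not executed. In fairness, the paper's own proof is silent on both points, so your write-up is strictly more complete than the original; but as a standalone proof these two steps still need to be closed.
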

	\begin{proof}
		First, the statements regarding irreducibility of $\Phi$ follow immediately from the statements on irreducibility
		of the Fano scheme as given in \autoref{lemma:linear-spaces-in-varieties-of-minimal-degree},
		coupled with the assumption that $\hilb X$ and $\uhilb X$ are irreducible.

		Since \autoref{lemma:linear-spaces-in-varieties-of-minimal-degree}, also gives the dimensions
		of the irreducible components of $F$, call them $F_i$, we have
		that the dimension of the $i$th irreducible component of $\Phi$ is
		$(\dim F_i - \dim \hilb X) + (2d+2)(\dim \uhilb X - \dim \hilb X) + \dim \hilb X$, as claimed.
		\end{proof}

\subsection{Base case: verifying \getrefnumber{custom:ind-mid} for degree $k$ varieties}
\label{subsection:base-case-plane}

We have essentially completed our inductive steps, as pictured in
\autoref{figure:induction-schematic}. 
The final step is to examine the base case of interpolation for
degree $k$ and dimension $k$ varieties.
Recall that smooth degree $k$, dimension $k$ varieties are realized as
a Segre embedding $\bp^1 \times \bp^{k-1} \ra \bp^{2k-1}$, by \autoref{lemma:segre-isomorphic-to-scroll}.

We have two remaining parts of our argument for showing scrolls satisfy interpolation.
First, in this subsection, we show that we can find a variety of degree $k$ and dimension $k$ containing
$2k+2$ general points and a general $(k-1)$-plane.

Then, in \autoref{subsection:base-case-points},
we show that $\minhilb k k$ satisfies interpolation, meaning that we can find
a scroll in $\minhilb k k$ through $3k+3$ general points.

\begin{figure}
	\centering
\includegraphics[scale=.3]{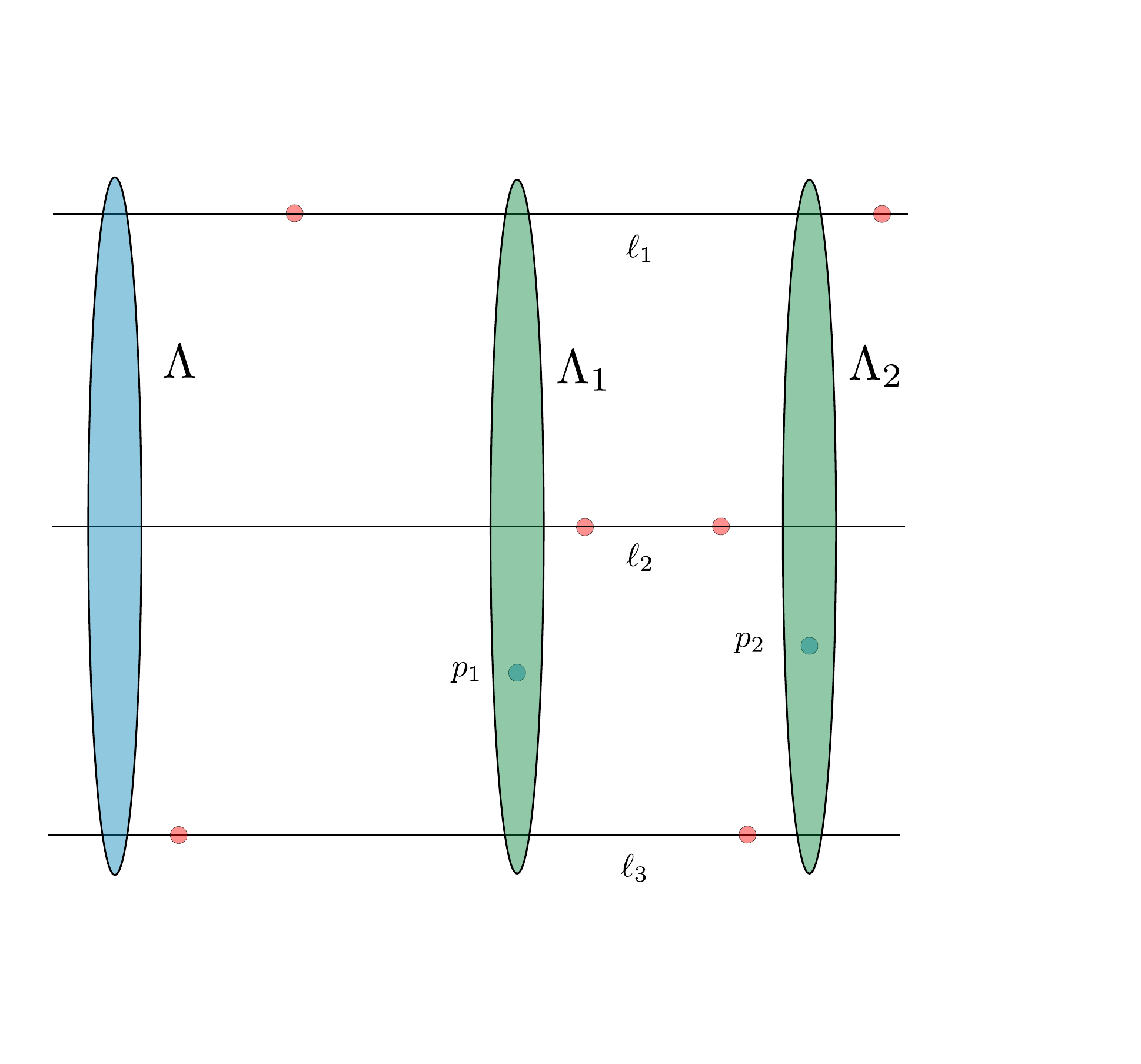}
\caption{A visualization of the idea of the proof of \autoref{proposition:segre-plane},
where one specializes pairs of points to lie on lines meeting the given $(k-1)$-plane $\Lambda$.
}
\label{figure:}
\end{figure}

\begin{proposition}
	\label{proposition:segre-plane}
	There exist finitely many varieties of minimal degree $\scroll {1^k}$
	containing $2k + 2$ general points and a general $(k-1)$-plane.
\end{proposition}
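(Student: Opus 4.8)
The plan is to set up the standard incidence correspondence and deduce finiteness from the existence of a single finite fiber. By \autoref{lemma:segre-isomorphic-to-scroll} the varieties in question are the Segre embeddings $\bp^1 \times \bp^{k-1} \ra \bp^{2k-1}$, so $n = 2k-1$ throughout, and as in the proof of \autoref{lemma:middle-induction} we may assume $k \geq 3$. Fixing such a Segre $X$, let $F$ be the relative Hilbert scheme of $(k-1)$-planes in $\uhilb X$ over $\hilb X = \minhilb k k$ and set
\[
\Phi := \uhilb X \times_{\hilb X} \cdots \times_{\hilb X} \uhilb X \times_{\hilb X} F,
\]
with $2k+2$ copies of $\uhilb X$, so that closed points of $\Phi$ are a Segre together with $2k+2$ marked points on it and a $(k-1)$-plane contained in it. Let $\pi_2 \colon \Phi \ra (\bp^n)^{2k+2} \times G(k,n+1)$ be the projection recording the points and the plane. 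By \autoref{lemma:linear-spaces-in-varieties-of-minimal-degree}, since $k \geq 3$ the only $(k-1)$-planes in $X$ are its ruling fibers, so $F$ is a $\bp^1$-bundle over $\minhilb k k$; together with the dimension formula of \autoref{proposition:smooth-minimal-degree-hilbert-scheme} this gives $\dim \Phi = \dim (\bp^n)^{2k+2} \times G(k,n+1)$. Since fiber dimension is upper semicontinuous, to prove the proposition it suffices to produce one configuration over which $\pi_2$ has a finite fiber; combined with the equality of dimensions, a finite nonempty fiber moreover forces $\pi_2$ to be dominant, so that the general fiber is nonempty as well.

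To produce such a fiber, fix a general $(k-1)$-plane $\Lambda$; by \autoref{lemma:linear-spaces-in-varieties-of-minimal-degree} it is automatically a ruling fiber of any Segre containing it. I specialize the points by partitioning $2k$ of them into $k$ pairs and moving each pair so that the line $\ell_i$ it spans meets $\Lambda$ in a single point, with these $k$ points of $\Lambda$ distinct and with $\ell_1, \ldots, \ell_k$ spanning $\bp^{2k-1}$; the remaining two points $p_{2k+1}, p_{2k+2}$ are left general. The first key step is that a Segre is cut out by quadrics: if $X$ contains $\Lambda$ and passes through the two points of a pair on $\ell_i$, then $\ell_i$ meets $X$ in the two paired points together with $\ell_i \cap \Lambda$, i.e.\ in at least three points, so $\ell_i \subset X$, and by \autoref{lemma:linear-spaces-in-varieties-of-minimal-degree} each $\ell_i$ is then a ruling line of $X$. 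Hence every Segre in the fiber over this configuration contains the fixed data $\ell_1, \ldots, \ell_k$ and $\Lambda$.

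The second key step recovers two more ruling fibers and applies \autoref{lemma:unique-segre-containing-linear-spaces}. For any Segre $X$ in the fiber, the ruling fiber through $p_{2k+1}$ is a $(k-1)$-plane passing through $p_{2k+1}$ and meeting each of the $k$ lines $\ell_i$, since ruling fibers meet ruling lines. A Schubert count shows there are only finitely many such planes: the $(k-1)$-planes through a fixed point form a family of dimension $k^2 - k$, and meeting each of the $k$ lines imposes $k-1$ conditions, for a total of $k(k-1) = k^2 - k$. The same applies to $p_{2k+2}$, giving finitely many candidate pairs $(\Lambda_2, \Lambda_3)$ of fibers through $p_{2k+1}$ and $p_{2k+2}$. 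For each such pair the data $\ell_1, \ldots, \ell_k, \Lambda, \Lambda_2, \Lambda_3$ meets the hypotheses of \autoref{lemma:unique-segre-containing-linear-spaces} and so determines at most one Segre, while conversely every $X$ in the fiber arises from its own pair $(\Lambda_2, \Lambda_3)$. Thus the fiber is finite, and it is nonempty for a general configuration of this special shape, which is all that is needed.

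The main obstacle is the transversality bookkeeping in the Schubert count. One must verify that for the general point $p_{2k+1}$ and general lines $\ell_i$ produced by the specialization the incidence conditions are actually transverse, so that the $(k-1)$-planes through $p_{2k+1}$ meeting all the $\ell_i$ form a genuinely finite set, and that each resulting fiber $\Lambda_2$ and $\Lambda_3$ meets the lines $\ell_i$ in points that are distinct from one another and from $\ell_i \cap \Lambda$, as \autoref{lemma:unique-segre-containing-linear-spaces} requires. Checking that the specialization can be arranged to satisfy all of these open conditions at once, while keeping $p_{2k+1}, p_{2k+2}$ and $\Lambda$ general, is the technical heart of the argument; once it is in place the remaining steps are direct applications of the cited lemmas.
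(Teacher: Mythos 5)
Your core argument is exactly the paper's: the same specialization (pairing off $2k$ of the points so the lines $\ell_i$ they span meet $\Lambda$), the same use of quadric generation to force $\ell_i \subset X$, the same identification via \autoref{lemma:linear-spaces-in-varieties-of-minimal-degree} of the $\ell_i$ as lines of type \ref{custom:line-small} and of the ruling fibers through the two remaining points via a Schubert count, and the same final application of \autoref{lemma:unique-segre-containing-linear-spaces}. The difference is that the paper's proof stops at the specialized configuration, leaving the passage back to a general configuration to its stated convention of omitting routine dimension counts, whereas you wrap the argument in an explicit incidence correspondence to make that passage precise.

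The gap is in that wrapper. The space $\minhilb k k$ is a compactification and contains degenerate members, and over those the relative Hilbert scheme $F$ of $(k-1)$-planes is \emph{not} a $\bp^1$-bundle: for instance, over a point of $\broken k k$ whose corresponding variety has a $k$-plane component, the fiber of $F$ has dimension $k$. So the claim $\dim \Phi = \dim \left( (\bp^n)^{2k+2} \times G(k,n+1) \right)$ is false as stated; $\Phi$ has components over the boundary whose dimension exceeds that of the target. Correspondingly, your reduction ``it suffices to produce one configuration over which $\pi_2$ has a finite fiber'' reduces to something your count does not deliver: you bound only the points of the fiber lying over smooth Segres, while the full fiber can contain positive-dimensional families of degenerate varieties through the same configuration. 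The repair is the one the paper uses in its neighboring arguments (compare \autoref{lemma:coskun-degenerations-of-scrolls} and the isolated-point formulation in the proofs of the inductive propositions): the locus of $\Phi$ lying over $\minhilbsmooth k k$ is open, so your finitely many smooth Segres are isolated points of the full fiber; restrict to the component $\Phi_0$ given by the closure of $\Phi|_{\minhilbsmooth k k}$, which does have dimension equal to that of the target; an isolated point in a fiber of the proper map $\pi_2|_{\Phi_0}$ then forces $\pi_2|_{\Phi_0}$ to be surjective with generically finite fibers, and since the boundary part of $\Phi_0$ is a proper closed subset of an irreducible variety of the target's dimension, it cannot dominate the target, so over a general configuration every point of the (finite, nonempty) fiber of $\pi_2|_{\Phi_0}$ corresponds to a smooth Segre. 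With these adjustments your proof is complete and matches the paper's in substance.
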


\begin{proof}
	The case $k \leq 2$ is easy so we assume $k \geq 3$.
	Label the points by $p_1, \ldots, p_{2k+2}$ and the $(k-1)$-plane by $\Lambda$.
	Define the $k$ lines
	$\ell_1, \ldots, \ell_k$, by $\ell_i := \overline {p_{2i+1}, p_{2i+2}}$.
	Specialize the points $p_1, \ldots, p_{2k}$ to general points
	satisfying the condition that the $k$ lines
	$\ell_1, \ldots, \ell_k$ all meet
	$\Lambda$. We claim that there are only finitely many varieties $\scroll {1^k}$ which
	contain such a configuration of points and $\Lambda$.

	To see this, note that the ideal of a scroll is generated by quadrics,
	by \cite[p.\ 6]{eisenbudH:on-varieties-of-minimal-degree}. Since any such $\scroll {1^k}$ 
	contains
	three points on each line $\ell_i$, namely $p_{2i+1}, p_{2i+2}$ and $\ell_i \cap \Lambda$, the line $\ell_i \subset X$.

	Hence, it suffices to show there are a finite, nonzero number of smooth
	varieties $\scroll {1^k}$ corresponding to a point
	in $\minhilb k k$ that
	\begin{enumerate}
		\item contain a $(k-1)$-plane $\Lambda$
		\item contain $k$ lines $\ell_1, \ldots, \ell_k$, and
		\item contain $2$ points $p_1, p_2$.
	\end{enumerate}

		Let $X$ be some smooth scroll containing $\Lambda, \ell_1, \ldots, \ell_k, p_1, p_2$.
	By \autoref{lemma:linear-spaces-in-varieties-of-minimal-degree}, since we are assuming $k \geq 3$, the only $(k-1)$-planes contained in $X$ are the fibers of the projection to $\bp^1$.
Since $\ell_1, \ldots, \ell_k$ all meet $\Lambda$ at precisely 1 point, they must be lines
of the type \ref{custom:line-small}. Additionally, the two remaining points $p_1$ and $p_2$ must be contained in two $(k-1)$-planes $\Lambda_1, \Lambda_2$ with
	$\Lambda_i \cap \ell_j \neq \emptyset$.
		By Schubert calculus, there are a positive finite number of planes containing $q_1$ which meet $\ell_1, \ldots, \ell_k$,
	(In fact, the number of such planes is equal to the number of standard Young
	tableaux of size $k-1$, although for our argument, we only need that this number is nonzero.) Therefore, there are 
	a positive finite number of choices of pairs of planes $\Lambda_1, \Lambda_2$
	containing $p_1, p_2$, and meeting lines $\ell_1, \ldots, \ell_k$.

	Now, for any such choice of planes $\Lambda_1, \Lambda_2$, there is a unique smooth scroll containing
	$\ell_1, \ldots, \ell_k, \Lambda_1, \Lambda_2, \Lambda$ by \autoref{lemma:unique-segre-containing-linear-spaces}.
	Hence, in total, there are
	a finite number of smooth scrolls containing $\Lambda, \ell_1, \ldots, \ell_k, p_1, p_2$, as claimed.
\end{proof}

\subsection{Base case: interpolation for degree $k$ varieties}
\label{subsection:base-case-points}

Before piecing our inductive argument for interpolation of scrolls together,
we have to show that $\minhilb k k$ satisfies interpolation, which we now do.
We do this by constructing an isolated point in a
fiber of the map $\pi_2$ from \eqref{equation:projections-segre-incidence-correspondence-with-lines-and-points}.
This fiber has positive dimensional
components, and so it takes
some care to show that this is indeed an isolated point.

\begin{proposition}
	\label{proposition:segre-lines-interpolation}
	Let $k \geq 2$ and 
	$p_1, \ldots, p_{3k+3}$ be $3k + 3$ general points
	in $\bp^{2k-1}$.
	Then, there is some variety of minimal degree $\scroll {1^k}$ containing $p_1, \ldots, p_{3k+3}$.
	That is, $\minhilb k k$ satisfies interpolation.
\end{proposition}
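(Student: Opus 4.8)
The plan is to verify interpolation via the isolated-point criterion \cite[Theorem A.7(5)]{landesmanP:interpolation-problems:-del-pezzo-surfaces}, exactly as in the proof of \autoref{lemma:high-induction}: rather than exhibiting a scroll through $3k+3$ truly general points, I will specialize the points to a convenient configuration, produce an explicit smooth scroll $X_0 \in \minhilb k k$ through them, and then show $X_0$ is isolated among all members of $\minhilb k k$ passing through that configuration. Concretely, group $3k$ of the points into $k$ collinear triples lying on general spanning lines $\ell_1, \ldots, \ell_k \subset \bp^{2k-1}$, and leave the remaining three points $q_1, q_2, q_3$ general. Note first that by \autoref{lemma:interpolation-numerics} with $d = k$ the relevant count is $3k+2$ points together with a general $0$-plane, i.e.\ $3k+3$ general points, so this is the correct number of incidence conditions and the incidence correspondence $\Phi$ of pointed scrolls together with its projection $\pi_2$ to $(\bp^{2k-1})^{3k+3}$ have equal dimension $(3k+3)(2k-1) = 6k^2+3k-3$.

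To build $X_0$, I would proceed as in the proof of \autoref{proposition:segre-plane}. For each of the three general points $q_j$, Schubert calculus produces a positive, finite number of $(k-1)$-planes $\Lambda_j$ containing $q_j$ and meeting all $k$ lines $\ell_1, \ldots, \ell_k$. Choosing one such plane for each $j$, the incidences $\ell_i \cap \Lambda_j$ are single points and are pairwise distinct for a general choice of the data, so \autoref{lemma:unique-segre-containing-linear-spaces} applies and yields a unique smooth Segre variety $X_0 \cong \scroll{1^k}$ containing $\ell_1, \ldots, \ell_k, \Lambda_1, \Lambda_2, \Lambda_3$. Since $q_j \in \Lambda_j \subset X_0$ and the $3k$ specialized points lie on the lines $\ell_i \subset X_0$, the scroll $X_0$ passes through all $3k+3$ points, giving existence.

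The crux is isolation, and this is where the positive-dimensional components of the fiber enter. Suppose $[Y] \in \minhilb k k$ passes through all $3k+3$ points and $Y$ is a smooth scroll. By \cite[p.\ 6]{eisenbudH:on-varieties-of-minimal-degree} the ideal of $Y$ is generated by quadrics, and a quadric vanishing at the three collinear points on $\ell_i$ must contain $\ell_i$; hence $\ell_1, \ldots, \ell_k \subset Y$. Thus every smooth solution lies in the family of Segres containing the $k$ fixed lines and passing through $q_1, q_2, q_3$. I would next show this family is finite by a dimension count: Segres through $k$ general spanning lines form a family of dimension $3(k-1)$ (for instance, comparing with $\dim \minhilb k k = 3k^2-3$ from \autoref{proposition:smooth-minimal-degree-hilbert-scheme} via the incidence correspondence of scrolls with $k$ of their ruling lines), the union of all such Segres is dense in $\bp^{2k-1}$, so each general point $q_j$ imposes $k-1$ conditions, and three general points cut the family down to dimension zero. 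Consequently there are only finitely many smooth Segres through the configuration.

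Finally, I would deduce that $X_0$ is isolated. The fiber of $\pi_2$ over the special configuration may well contain positive-dimensional components, but these are supported on the boundary of $\minhilb k k$ — the non-smooth or reducible members, whose ideals need not be generated by quadrics, so the line-forcing argument fails for them and they can move in families through the specialized points. Since $X_0$ is a smooth scroll, it is a smooth point of $\minhilb k k$ by \autoref{proposition:smooth-minimal-degree-hilbert-scheme} and hence lies in the interior; the locus of smooth Segres being open, no boundary family can accumulate at $X_0$, and the finiteness of smooth solutions then forces $X_0$ to be an isolated point of the fiber. Interpolation for $\minhilb k k$ now follows from \cite[Theorem A.7(5)]{landesmanP:interpolation-problems:-del-pezzo-surfaces}. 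The main obstacle is precisely this isolation step: one must separate the honest smooth solution from the positive-dimensional boundary locus through the same points, which is exactly what makes the careful dimension count for Segres through the $k$ lines, rather than a naive fiber-dimension estimate, necessary. The case $k = 2$, where $X_0$ is a quadric surface through nine general points of $\bp^3$, is classical and can be treated directly.
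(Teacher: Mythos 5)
Your numerics, your specialization (collinear triples on $k$ spanning lines plus three free points), and your existence argument (Schubert calculus to get planes $\Lambda_j$ through the $q_j$ meeting all the lines, then \autoref{lemma:unique-segre-containing-linear-spaces}) coincide exactly with the paper's. The genuine gap is in your finiteness claim for smooth solutions. Your dimension count of ``Segres through $k$ general spanning lines'' --- dimension $3(k-1)$, computed via the incidence correspondence with $k$ of their \emph{ruling} lines --- only accounts for Segres containing $\ell_1, \ldots, \ell_k$ as lines of type \ref{custom:line-small}. By \autoref{lemma:linear-spaces-in-varieties-of-minimal-degree}, a smooth $\scroll {1^k}$ has a second component of its Fano scheme of lines, namely lines lying inside the $(k-1)$-plane fibers of $\pi$ (type \ref{custom:line-large}), and a smooth Segre can contain the general lines $\ell_i$ in that way. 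Concretely, for $k=3$: choose for each $\ell_i$ a $2$-plane $P_i \supset \ell_i$ (a three-parameter choice for each $i$); three pairwise disjoint $2$-planes in $\bp^5$ lie on a unique Segre $\bp^1 \times \bp^2$ as fibers of its projection, so this produces a $9$-dimensional family of \emph{smooth} Segres containing $\ell_1, \ell_2, \ell_3$, and imposing passage through $q_1, q_2, q_3$ cuts the dimension down by at most $3(k-1) = 6$, leaving a positive-dimensional family of smooth Segres through your entire configuration. So ``there are only finitely many smooth Segres through the configuration'' is false for $k \geq 3$, and your closing diagnosis --- that the positive-dimensional components of the fiber are supported on the boundary of non-smooth or reducible members, so that openness of the smooth locus yields isolation --- identifies the wrong culprit. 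This is precisely what the paper is warning about when it says the fiber of $\pi_2$ has positive-dimensional components.

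What the proof actually requires, and what the paper supplies in \autoref{lemma:isolated-segre}, is an argument that $X_0$ --- on which every $\ell_i$ appears as a line of type \ref{custom:line-small} --- cannot lie on a positive-dimensional family inside the fiber whose general member contains some $\ell_i$ as a line of type \ref{custom:line-large}. The paper's mechanism: over a connected smooth curve $C$ in the fiber through $[X_0]$, each closed fiber of the relative Hilbert scheme of lines has two smooth connected components of \emph{different} dimensions, so by Stein factorization (flatness of the factored map to a smooth curve, plus distinct Hilbert polynomials of the components) the total space of that relative Hilbert scheme is disconnected, separating the two types globally; hence the type of each $\ell_i$ is locally constant along $C$. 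Combined with the finiteness, via Schubert calculus and \autoref{lemma:unique-segre-containing-linear-spaces}, of solutions in which every $\ell_i$ is of type \ref{custom:line-small}, this yields isolation of $[X_0]$. Without this connectedness/monodromy step, or some substitute ruling out a jump of line type within a family in the fiber, your proof does not close.
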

\begin{figure}
	\centering
	\includegraphics[scale=.35]{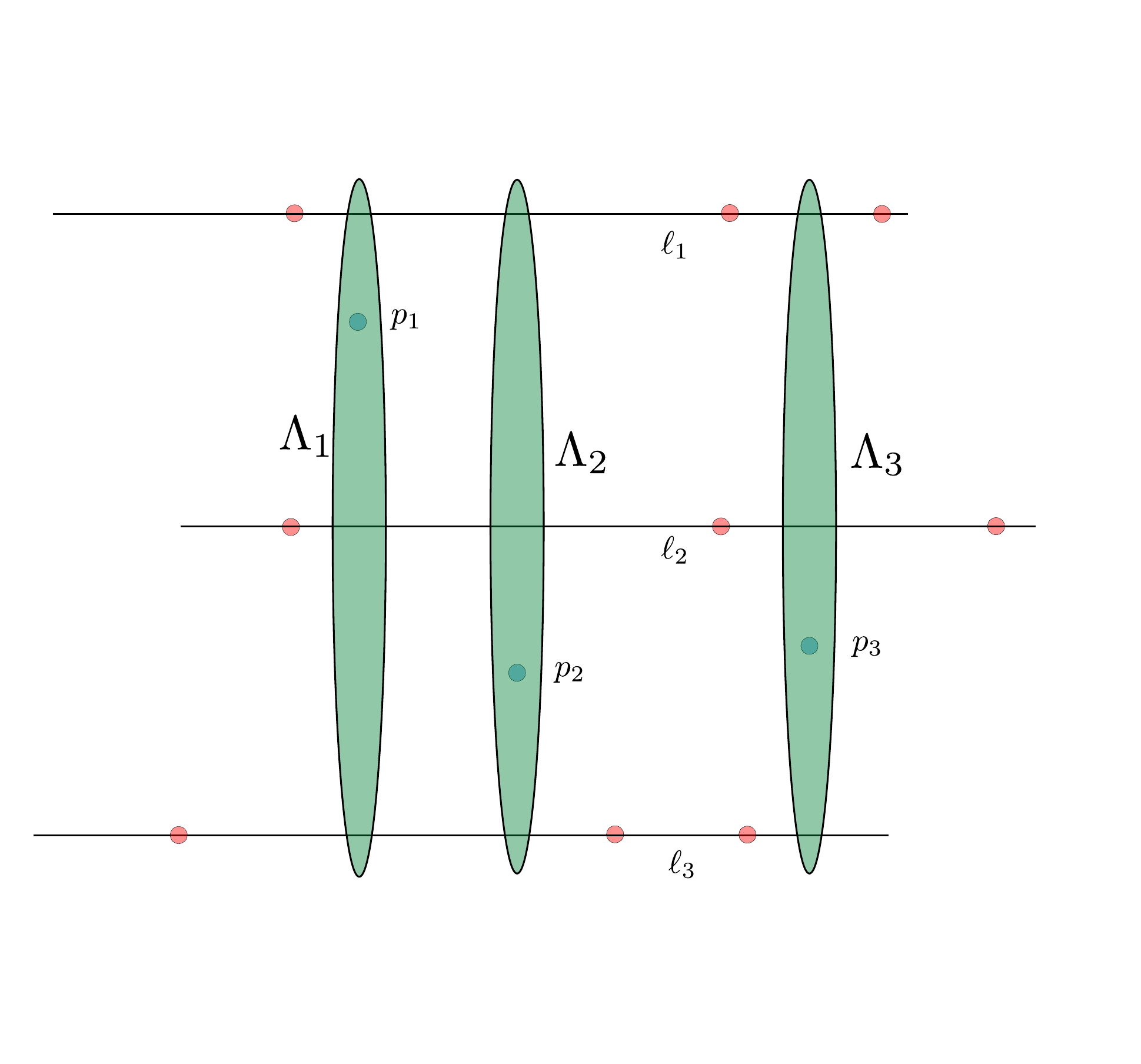}
	\caption{A visualization of the idea of the proof of \autoref{proposition:segre-lines-interpolation},
	where one specializes triples of points to lie on lines, and then finds a scroll containing those points.
}
	\label{figure:isolated-segre-threefold}
\end{figure}

\begin{proof}
	To begin with, specialize the points, as follows:
	Pick $k$ lines $\ell_{1}, \ldots, \ell_k$ so that $\ell_1, \ldots, \ell_k$
	span $\bp^{2k-1}$.
	Now, specialize the points so that
	$p_{3i+1}, p_{3i+2}, p_{3i+3}$ are distinct points on $\ell_{i}$ for $1 \leq i \leq k$.
	This leaves us with $3$ general points, which we have labeled as $p_1, p_2, p_3$. 
	
	Let $X$ be a smooth scroll of type $(1^k)$ and let $\sch^{t+1}(\uhilb X/\hilb X)$ denote the relative Hilbert 
	scheme of lines in $\uhilb X$ over $\hilb X$.
	Define the incidence correspondences
	\begin{align}
		\nonumber
		\Psi &:= \uhilb X \times_{\hilb X} \cdots \times_{\hilb X} \uhilb X, \\
\label{equation:correspondence-with-lines-and-points}
\Phi &:= \sch^{t+1}(\uhilb X/\hilb X) \times_{\hilb X} \cdots \times_{\hilb X} \sch^{t+1}(\uhilb X/\hilb X)
			\times_{\hilb X} \uhilb X\times_{\hilb X} \uhilb X\times_{\hilb X} \uhilb X,
	\end{align}
		where $\Psi$ has a product taken over $3k+3$ copies of $\uhilb X$ and $\Phi$ has a product taken over $k$ copies of $\sch^{t+1}(\uhilb X/\hilb X)$. Set theoretically (but without
	specifying a scheme structure)
	we can describe the closed points of $\Psi$ as
	\begin{align*}
	\label{equation:segre-incidence-correspondence-with-points}
		\left\{ \left(X, r_1, \ldots, r_{3k+3}\right) \subset \minhilb k k  \times (\bp^{2k-1})^{3k+3} : r_i \in X \right\}
	\end{align*}
	and the closed points of $\Phi$ as
	\begin{align}
			\nonumber
		\left\{ \left( L_1, \ldots, L_k, r_1, r_2, r_3, X \right) \subset (G(2,2k))^k \times (\bp^{2k-1})^3 \times \minhilb k k : L_i \subset X, r_i \in X  \right\}.
	\end{align}
	We have projections
	\begin{equation}
		\label{equation:projections-segre-incidence-correspondence-with-lines-and-points}
		\begin{tikzcd} 
		\qquad & \Psi \ar{dl}{\eta_1} \ar {dr}{\eta_2}& \\
		\minhilb k k && (\bp^{2k-1})^{3k+3}. \\
			\qquad & \Phi \ar{dl}{\pi_1} \ar {dr}{\pi_2}& \\
			\minhilb k k && (G(2, 2k))^k \times (\bp^{2k-1})^3.
		\end{tikzcd}\end{equation}

	Define $\scx = \eta_2^{-1} (p_1, \ldots, p_{3k+3})$
	and
	$\scy := \pi_2^{-1}(\ell_1, \ldots, \ell_k, p_1, p_2, p_3)$.
	To complete the proof, it suffices to show
	$\scx$ has an isolated point.
	Note that the locus of $\scx$ corresponding
	to smooth scrolls is the same as the locus
	of $\scy$ corresponding
	to smooth scrolls, since any scroll
	containing $p_4, \ldots, p_{3k+3}$ contains the lines
	$\ell_1, \ldots, \ell_k$ as scrolls are generated
	by quadrics.
	Therefore, to show $\scx$ has an isolated point,
	it suffices to show $\scy$ has an isolated point
	corresponding to a smooth scroll.

	Since we choose $\ell_1, \ldots, \ell_k, p_1, p_2, p_3$
	generally, by Schubert calculus, there are a positive finite number of $(k-1)$-planes containing $p_i$ and meeting all lines
	$\ell_1, \ldots, \ell_k$.
	In particular, there is a positive, finite number of three tuples of $(k-1)$-planes $(\Lambda_1, \Lambda_2, \Lambda_3)$ so that $\Lambda_i$ contains
	$p_i$ and meets $\ell_1, \ldots, \ell_k$. Further, since our parameters
	were chosen generally, 
	by \autoref{lemma:unique-segre-containing-linear-spaces}, there is a unique
	Segre variety $W$ containing any given $\Lambda_1, \Lambda_2,\Lambda_3$ and $\ell_1, \ldots, \ell_k$.

	This corresponds to a point $[W]$ in $\scy$, hence also some $[W']$ in $\scx$. 
	To complete the proof, it remains to show that $[W']$ is an isolated point in $\scy$.
	This follows from \autoref{lemma:isolated-segre}. 	
\end{proof}

\begin{lemma}
	\label{lemma:isolated-segre}
	Consider the incidence correspondence
	$\Phi$ defined in
	\eqref{equation:correspondence-with-lines-and-points}
	with projection maps $\pi_1$ to $\minhilb k k$ and $\pi_2$ to
	$(G(2, 2k))^k \times (\bp^{2k-1})^3$, as defined in
	\eqref{equation:projections-segre-incidence-correspondence-with-lines-and-points}.
	Let $\ell_1, \ldots, \ell_k$ be lines in $\bp^{2k-1}$ and let $p_1, p_2,p_3 \in \bp^{2k-1}$ be three points.
		Define $\scy := \pi_2^{-1}(\ell_1, \ldots, \ell_k, p_1, p_2, p_3)$.
		Write $[W'] \in \scy$ as
		$W' \cong \bp^1 \times \bp^{k-1}$ via
		\autoref{lemma:segre-isomorphic-to-scroll}. 
		If the lines $\ell_i$ are all 
		of type ~\ref{custom:line-small} 
		Then, $[W']$ is an isolated point of $\scy$.
		\end{lemma}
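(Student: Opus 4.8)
The plan is to localize at $[W']$ and reduce the problem to a finiteness statement about smooth Segre varieties in which all $k$ lines are of type \ref{custom:line-small}. Concretely, I would first show that an entire neighborhood of $[W']$ in $\scy$ parametrizes smooth Segres containing $\ell_1, \ldots, \ell_k$ as type \ref{custom:line-small} lines, and then prove that only finitely many such Segres pass through $p_1, p_2, p_3$; together these give that $[W']$ is isolated.

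For the local reduction, recall from \autoref{lemma:linear-spaces-in-varieties-of-minimal-degree} that the Fano scheme of lines on a smooth $\scroll{1^k}$ is smooth and, since here $j = k$ and $t = 1$, has exactly the two irreducible components of type \ref{custom:line-small} and type \ref{custom:line-large}; being smooth, these two components are disjoint. In $W'$ every $\ell_i$ is a section of $\pi \colon W' \to \bp^1$, hence meets each ruling $(k-1)$-plane in a single point and is genuinely not contained in a ruling, so $\ell_i$ is strictly of type \ref{custom:line-small} and not of type \ref{custom:line-large}. Therefore the point of $\Phi$ over $[W']$ lies in the interior of the component of the relative Hilbert scheme of lines in which all $k$ lines are of type \ref{custom:line-small}, disjoint from every other component. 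Passing to a neighborhood, I may assume that every point of $\scy$ near $[W']$ is a smooth Segre with $\ell_1, \ldots, \ell_k$ all of type \ref{custom:line-small}.

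It then remains to show such Segres are finite in number. If $X$ is one of them, its rulings form a pencil of pairwise disjoint $(k-1)$-planes, each meeting every section $\ell_i$ once, and every point of $X$ lies on a unique ruling. Thus $p_j$ lies on a unique ruling $R_j \subset X$, and $R_j$ is a $(k-1)$-plane through $p_j$ meeting all of $\ell_1, \ldots, \ell_k$. By the Schubert calculus count already invoked in the proof of \autoref{proposition:segre-lines-interpolation}, there are only finitely many such $(k-1)$-planes, so each $R_j$ ranges over a finite set; and once $R_1, R_2, R_3$ are fixed, \autoref{lemma:unique-segre-containing-linear-spaces} (whose hypotheses hold since $\ell_1, \ldots, \ell_k$ span $\bp^{2k-1}$ and meet the $R_j$ in the required single points) determines $X$ uniquely. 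Hence only finitely many such $X$ exist, and $[W']$, being one of them, is isolated in $\scy$.

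The step I expect to be the main obstacle, and the source of the ``care'' needed, is the local reduction, because $\scy$ really does carry positive-dimensional components, arising from Segres in which one or more $\ell_i$ degenerates to type \ref{custom:line-large}; the content of the argument is precisely that the disjointness of the two Fano components over the smooth locus keeps $[W']$ off the closure of these components. The finiteness statement, by contrast, is a routine consequence of the ruling structure together with \autoref{lemma:unique-segre-containing-linear-spaces}.
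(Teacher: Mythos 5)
Your overall skeleton matches the paper's: both proofs combine (i) finiteness of smooth Segres through the configuration with all $\ell_i$ of type \ref{custom:line-small} (Schubert calculus plus \autoref{lemma:unique-segre-containing-linear-spaces} --- your second half is essentially identical to the paper's first step), with (ii) an argument that the type of the $\ell_i$ cannot jump in a family through $[W']$. The problem is step (ii): your ``local reduction'' is exactly the hard point, and the justification you give for it is a non sequitur. You argue that because the Fano scheme of lines on a \emph{single} smooth $\scroll{1^k}$ is smooth with two disjoint components, the point of $\Phi$ over $[W']$ ``therefore'' lies interior to an all-type-\ref{custom:line-small} component of the \emph{relative} Hilbert scheme, disjoint from every other component. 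Fiberwise disjointness does not imply this relative statement: nothing in what you have said rules out a one-parameter family $(X_t, \ell_i)$ with $\ell_i$ of type \ref{custom:line-large} in $X_t$ for $t \neq 0$ specializing to $(W', \ell_i)$ with $\ell_i$ of type \ref{custom:line-small} in $W'$; each fiber of the relative Fano scheme would still have disjoint components. What is needed is that the type-\ref{custom:line-large} locus is \emph{closed} (equivalently, the type-\ref{custom:line-small} locus is open) in the relative Fano scheme over the family, and that is a genuinely relative assertion. The case $k=2$ shows the danger concretely: there the two fiberwise components have equal dimension, and monodromy in a family of smooth quadrics really does exchange the two rulings, so the relative Fano scheme over a connected base of quadrics can be connected even though every fiber is disconnected. (The paper accordingly treats $k=2$ separately; you do not.)

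This missing step is precisely what the paper's proof supplies. It restricts to a curve $C \subset \scy$ through $[W']$, forms the relative Hilbert scheme of lines $\sch \to C$, and uses Stein factorization: the intermediate curve $D$ is smooth, the factoring map is flat with connected fibers, and since for $k \geq 3$ the two fiberwise Fano components have \emph{different dimensions} (hence different Hilbert polynomials), flatness forces $D$, and hence $\sch$, to be disconnected; a connected family of lines therefore cannot pass from one type to the other. To repair your proof you must either reproduce such an argument, or give a substitute: for instance, show the type-\ref{custom:line-large} locus is closed over the smooth locus by observing it is the image of the proper relative incidence scheme of pairs (line, ruling $(k-1)$-plane) with the line contained in the plane; or invoke homogeneity, noting that $\pgl_{2k}$ acts transitively on smooth Segres, so the relative Fano scheme over the smooth locus is an associated fiber bundle whose fiberwise components, having distinct dimensions when $k \geq 3$, cannot be interchanged. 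Any of these closes the gap, but as written the key inference is asserted rather than proved.
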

\begin{proof}
	We assume $k \geq 3$, as the case $k = 2$ holds since $S_{1,1}$ is a quadric hypersurface.

	First, by Schubert calculus and \autoref{lemma:unique-segre-containing-linear-spaces},
	there are only finitely many points  $[Y] \in \scy$ so that all $\ell_i$ appear as lines of type
	~\ref{custom:line-small}.
	To conclude the proof, it suffices to show that there cannot be a family of scrolls in $\scy$
	containing $W'$ so that in the general member of the family, there is some $\ell_i$ which appears as a line
	of type ~\ref{custom:line-large}.
	So, suppose for the sake of contradiction, that such a family $f: \scz \rightarrow C$, for $C\subset \scy$ and $[W'] \in C$, does exist.
We may assume that $C$ is reduced and $1$ dimensional (by replacing $C$ with
$1$ dimensional reduced closed subscheme) and that $C$ is smooth and connected
(by replacing $C$ with a connected component of its normalization).

	Then, consider the relative Hilbert scheme $\sch := \sch^{t+1}(\scz/C)$ of lines inside $\scz$ over $C$. 
	We claim that $\sch$ has two connected components: one whose lines are of type
	~\ref{custom:line-small} and the other whose lines are of type ~\ref{custom:line-large}.
	In particular, this suffices to show such a family over $C$ cannot exist,
	as such a family would intersect two distinct connected
	components of $\sch$.

	To see that $\sch$ has the two connected components, we examine the map
	$f$. Since the fiber of $\scz$ over a closed point of $C$ is a scroll,
	we know that the fibers of $g:\sch \ra C$ over any closed point of $C$ has two smooth connected components
	of different dimensions, using
	\autoref{lemma:linear-spaces-in-varieties-of-minimal-degree},
	and our standing assumption that $k > 2$.
	
	Now, by Stein factorization, we can factor
	\begin{equation}
		\nonumber
		\label{equation:}
		\begin{tikzcd}
			\sch \ar {rr}{\alpha} \ar {rd}{g} && D \ar {ld}{\beta} \\
			 & C & 
		 \end{tikzcd}\end{equation}
	 where $\alpha$ has connected fibers, $\beta$ is finite,
	 and $D$ is a normal curve, hence smooth.
	 Further, since $g$ is a dominant map to a smooth
	 curve, we obtain that
	 $\alpha$ is also a dominant map to a smooth curve, hence flat.
	 But, since the connected components
	 of the closed fibers of $g$ have different dimensions,
	 the connected components of the closed fibers of $\alpha$ have different dimensions,
	 hence different Hilbert polynomials.
	 Since $\alpha$ is flat, $D$ must be disconnected, and hence $\sch$ must be disconnected.
\end{proof}

\subsection{Spelling out the induction}
In this subsection, we combine the previous results from this section to prove that smooth scrolls satisfy interpolation.
We then conclude that all varieties of minimal degree satisfy interpolation, and hence also strong interpolation in characteristic
$0$. See \autoref{figure:induction-schematic} for a visualization 
of the proof of \autoref{theorem:scrolls-interpolation}.

\begin{figure}
	\centering
\begin{equation}
  \nonumber
\begin{tikzpicture}[baseline= (a).base]
\node[scale=.45] (a) at (0,0){
  \begin{tikzcd}[column sep=tiny]
    \qquad && && && && && & (d = 2k+1) \ar{dll}{\begin{NoHyper}\autoref{lemma:high-induction}\end{NoHyper}} \\ 
    \qquad && && && && & (d = 2k) \ar{dll}{\begin{NoHyper}\autoref{lemma:high-induction}\end{NoHyper}} && \\
    \qquad && && && & (d = 2k-1) \ar{dlll}{\begin{NoHyper}\autoref{corollary:mid-induction}\end{NoHyper}} &&  && \\
    \qquad && && \left[d = 2k-2\right] \ar{dl}[swap]{\begin{NoHyper}\autoref{lemma:middle-induction}\end{NoHyper}} && (d=2k-2) \ar{dlll}{\begin{NoHyper}\autoref{corollary:mid-induction}\end{NoHyper}}&  &&  && \\
    \qquad && & \left[d = 2k - 3\right] \ar{dl}[swap]{\begin{NoHyper}\autoref{lemma:middle-induction}\end{NoHyper}} && (d=2k-3) \ar{dlll}{\begin{NoHyper}\autoref{corollary:mid-induction}\end{NoHyper}} &&  &&  && \\
    \qquad && \iddots \ar{dl}[swap]{\begin{NoHyper}\autoref{lemma:middle-induction}\end{NoHyper}}&& \iddots \ar{dlll}{\begin{NoHyper}\autoref{corollary:mid-induction}\end{NoHyper}} &  &&  &&  && \\
    \qquad & \left[d = k+1\right] \ar{dl}[swap]{\begin{NoHyper}\autoref{lemma:middle-induction}\end{NoHyper}} && (d = k + 1) \ar{dlll}{\begin{NoHyper}\autoref{corollary:mid-induction}\end{NoHyper}} && &&  &&  && \\
    \substack{\begin{NoHyper}\autoref{proposition:segre-plane}\end{NoHyper}}\hspace{.2cm}\left[d = k\right]  && \substack{\begin{NoHyper}\autoref{proposition:segre-lines-interpolation}\end{NoHyper}}\hspace{.2cm}(d = k)  & && &&  &&  &&
  \end{tikzcd}
};
\end{tikzpicture}
\end{equation}
\caption{
This is a schematic diagram for the proof that scrolls satisfy interpolation.
The parenthesized expressions $(d = a)$ indicate that
scrolls of degree $a$ satisfy interpolation,
while the bracketed expressions $[d = a]$ indicate that scrolls of degree $a$
satisfy the hypothesis \ref{custom:ind-mid}.
See \autoref{theorem:scrolls-interpolation} for a written proof.
The arrows point from higher degree to lower degree, and are labeled
by the proposition showing that interpolation for the lower degree
variety implies interpolation for the higher degree variety.
}
	\label{figure:induction-schematic}
\end{figure}
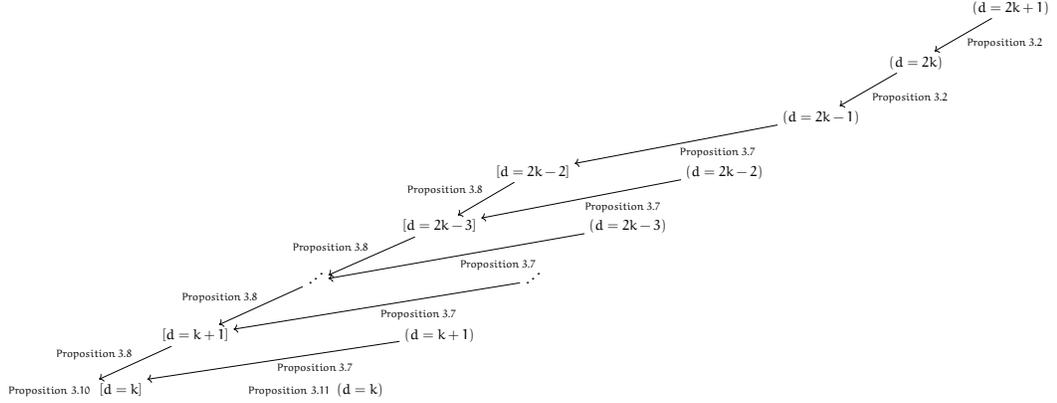

\begin{theorem}
	\label{theorem:scrolls-interpolation}
	If $X$ is a rational normal scroll of minimal degree
	then $\hilb X$ satisfies interpolation.
\end{theorem}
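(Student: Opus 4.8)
The plan is to fix the dimension $k$ and induct on the degree $d \geq k$. By the notation of the section, if $X$ is a smooth scroll of degree $d$ and dimension $k$ then $\hilb X = \minhilb{d}{k}$ depends only on $d$ and $k$, so the theorem is equivalent to proving the statement $I(d)$ that $\minhilb{d}{k}$ satisfies interpolation, for every $d \geq k$. I would run two interlocking inductions in parallel: one on $I(d)$, and one on the auxiliary statement \ref{custom:ind-mid} in degree $d$, which produces a member of $\minhilb{d}{k}$ through the prescribed general points while containing a general plane of the appropriate dimension. All of the substantive geometry has already been packaged into the propositions of this section; what remains for the theorem itself is to check that their hypotheses and conclusions dovetail so as to cover every $d \geq k$ with no gaps, exactly as displayed in \autoref{figure:induction-schematic}.

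First I would dispose of the two small-dimensional cases that the component propositions exclude. For $k = 1$ the scrolls are rational normal curves, whose interpolation is classical and is recovered in \autoref{remark:castelnuovos-lemma}; for $k = 2$ surface scrolls are handled by Coskun \cite[Example, p.\ 2]{coskun:degenerations-of-surface-scrolls}. Hence I may assume $k \geq 3$. The base degree is $d = k$, where $\minhilb{k}{k}$ consists of the Segre varieties $\scroll{1^k}$: \autoref{proposition:segre-lines-interpolation} supplies $I(k)$, and \autoref{proposition:segre-plane} supplies the auxiliary statement \ref{custom:ind-mid} in degree $k$.

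With the base cases in hand, I would run the inductive steps in the order dictated by their hypotheses. I first propagate the auxiliary statement upward: \autoref{lemma:middle-induction} promotes \ref{custom:ind-mid} from degree $d-1$ to degree $d$ for each $k+1 \leq d \leq 2k-2$, so starting from degree $k$ it holds for all $k \leq d \leq 2k-2$. Feeding these into \autoref{corollary:mid-induction}, which turns \ref{custom:ind-mid} in degree $d-1$ into interpolation in degree $d$ for $k+1 \leq d \leq 2k-1$, yields $I(d)$ for all $k+1 \leq d \leq 2k-1$; together with the base $I(k)$ this gives interpolation throughout $k \leq d \leq 2k-1$. Finally \autoref{lemma:high-induction} deduces $I(d)$ from $I(d-1)$ for every $d > 2k-1$, and since its base input $I(2k-1)$ has just been produced by the middle range, a last induction delivers $I(d)$ for all $d \geq 2k$. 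Combining the three ranges gives $I(d)$ for every $d \geq k$, which is the assertion of the theorem.

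The individual steps are mostly bookkeeping, so the one point requiring care is confirming that the two inductions truly interlock. Concretely, one must check that the auxiliary chain is pushed up to exactly degree $2k-2$, so that \autoref{corollary:mid-induction} can be invoked at the very top of the middle range $d = 2k-1$; and that \autoref{lemma:high-induction} bootstraps off the interpolation statement at $d = 2k-1$ coming from the middle range without demanding any fresh input. Since the intervals $[k,k]$, $[k+1,2k-1]$, and $[2k,\infty)$ partition the degrees $d \geq k$, and each cited proposition invokes only strictly smaller degrees already treated, the compound induction is well-founded and no degree escapes the argument.
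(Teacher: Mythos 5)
Your proposal is correct and takes essentially the same route as the paper's own proof: the base case $d = k$ via \autoref{proposition:segre-lines-interpolation} and \autoref{proposition:segre-plane}, propagation of \ref{custom:ind-mid} through $d \leq 2k-2$ via \autoref{lemma:middle-induction}, interpolation in the middle range via \autoref{corollary:mid-induction}, and the range $d \geq 2k$ via \autoref{lemma:high-induction}. The only difference is cosmetic: you dispatch $k = 1$ and $k = 2$ explicitly at the outset, whereas the paper leaves those reductions implicit inside the cited propositions.
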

\begin{proof}
	First, if $X = \scroll {1^k}$ then this holds by
	\autoref{proposition:segre-lines-interpolation} with
	$j = 0$. 

	Second, suppose $X = \scroll {2^t, 1^{k-t}}$ with
	$1 \leq t \leq k-1$. We next show $X$ satisfies interpolation. By \autoref{proposition:segre-plane}, inductive hypothesis ~\ref{custom:ind-mid}
	holds for $d = k+1$. By induction, assume it holds for a given degree $d-1$.
	Then, By \autoref{lemma:middle-induction}, it holds for degree $d$, provided
	$d \leq 2k-2$. Finally, we obtain that varieties of degree $t + k$
	satisfy interpolation by \autoref{corollary:mid-induction}.

	To complete the proof, it suffices to show that $X$ satisfies interpolation
	when $d > 2k - 1$. We have shown this when $d = 2k -1$. Inductively assume that
	inductive hypothesis ~\ref{custom:ind-high} holds for degree $d-1$. Then,
	by \autoref{lemma:high-induction}, it also holds for degree
	$d$. Therefore, varieties of degree $d$ satisfy interpolation.
\end{proof}

We now prove our main theorem.

\minimalInterpolation*
\begin{proof}
	First, by \cite[Theorem 1]{eisenbudH:on-varieties-of-minimal-degree},
	we only need show that quadric surfaces, 
	scrolls, and the Veronese embedding $\bp^2 \ra \bp^5$ satisfy
	interpolation.

	First, quadric surfaces satisfy interpolation because all hypersurfaces do
	and the 2-Veronese surfaces satisfies interpolation by
	\cite[Theorem 5.6]{landesmanP:interpolation-problems:-del-pezzo-surfaces}.
	Finally, by \autoref{theorem:scrolls-interpolation}
	if $X$ is a scroll then $\hilb X$ satisfies interpolation.
\end{proof}

In order to state an immediate corollary of
\autoref{theorem:interpolation-minimal-surfaces}, recall the notion of strong interpolation given in 
\cite[Definition A.5]{landesmanP:interpolation-problems:-del-pezzo-surfaces}: An irreducible subscheme of the Hilbert scheme $\sch$
parameterizing varieties of dimension $k$ in $\bp^n$
satisfies {\bf strong interpolation}
if for any collection of planes $\Lambda_1, \ldots, \Lambda_m$
of dimensions $\lambda_1 \geq \cdots \geq \lambda_m$ with
$0 \leq \lambda_i \leq n - k$ and $\sum_{i=1}^m \lambda_i \leq \dim \sch$,
there is some $[Y] \in \sch$ meeting all of $\Lambda_1, \ldots, \Lambda_m$. We say an integral variety $X$ lying on a unique irreducible
component of the Hilbert scheme satisfies strong interpolation
if $\hilb X$ does.

\begin{corollary}
	\label{corollary:strong-interpolation-for-varieties-of-minimal-degree}
	If $\bk$ has characteristic $0$, then smooth varieties of minimal degree
	over $\bk$ satisfy strong interpolation.
\end{corollary}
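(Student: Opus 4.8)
The plan is to obtain strong interpolation as a formal consequence of ordinary interpolation, via the general equivalence between the two notions recorded in \cite[Theorem A.7]{landesmanP:interpolation-problems:-del-pezzo-surfaces}. The two ingredients needed to apply that equivalence are already in hand. By \autoref{theorem:interpolation-minimal-surfaces}, every smooth variety of minimal degree $X$ satisfies interpolation, and by \autoref{proposition:smooth-minimal-degree-hilbert-scheme} we have $H^1(X, N_{X/\bp^n}) = 0$, so $X$ lies on a unique irreducible component $\hilb X$ of the Hilbert scheme and is a smooth point of it.

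First I would check that the standing hypotheses of \cite[Theorem A.7]{landesmanP:interpolation-problems:-del-pezzo-surfaces} are satisfied: these amount to $X$ being a smooth variety that is a smooth point of the Hilbert scheme with vanishing $H^1$ of its normal bundle, together with the characteristic $0$ assumption imposed in the corollary. All of these hold for each of the three types of smooth varieties of minimal degree classified in \cite[Theorem 1]{eisenbudH:on-varieties-of-minimal-degree}---quadric hypersurfaces, the Veronese surface, and smooth scrolls---by the two results cited above.

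Second, I would invoke the equivalence itself. The content I would use, rather than reprove, is that under these hypotheses interpolation and strong interpolation coincide. Concretely, strong interpolation asks to meet an arbitrary collection of planes $\Lambda_1, \ldots, \Lambda_m$ of dimensions $\lambda_1 \geq \cdots \geq \lambda_m$ subject only to $\sum_i \lambda_i \leq \dim \hilb X$, while interpolation is the special case of prescribed points together with a single plane of complementary dimension. The passage from the latter to the former is a deformation argument: one shows the incidence correspondence parametrizing pairs consisting of a member of $\hilb X$ and a choice of flags meeting the $\Lambda_i$ has the expected dimension, and that the projection to the space of linear configurations is dominant, the characteristic $0$ hypothesis entering through generic smoothness (Kleiman--Bertini) to convert an expected-dimension count into an actual nonempty general fiber.

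The step I expect to require the most care is verifying that the equivalence in \cite[Theorem A.7]{landesmanP:interpolation-problems:-del-pezzo-surfaces} applies in the generality needed here---to all dimensions $k$ and to all three classes of varieties---rather than only in the surface setting that is the focus of that reference. Since the proof of the equivalence uses only the smooth-point and $H^1$-vanishing hypotheses, and nothing specific to surfaces, I expect it to carry over verbatim; confirming this is the essential, if routine, obstacle.
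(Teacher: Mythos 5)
Your proposal is correct and follows exactly the paper's own argument: combine \autoref{theorem:interpolation-minimal-surfaces} with the equivalence of interpolation and strong interpolation in characteristic $0$ from \cite[Theorem A.7]{landesmanP:interpolation-problems:-del-pezzo-surfaces}, using \autoref{proposition:smooth-minimal-degree-hilbert-scheme} to verify the hypotheses (smooth point of the Hilbert scheme, $H^1(X, N_{X/\bp^n}) = 0$). The additional commentary on how the equivalence is proved and on its applicability beyond surfaces is reasonable diligence but not part of the paper's proof, which simply cites the equivalence of conditions $(1)$ and $(v)$ there.
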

\begin{proof}
	This follows from \autoref{theorem:interpolation-minimal-surfaces}
	and the equivalence of interpolation and strong interpolation
	in characteristic $0$, as follows from the equivalence of conditions
	$(1)$ and $(v)$ from
	\cite[Theorem A.7]{landesmanP:interpolation-problems:-del-pezzo-surfaces}, using \autoref{proposition:smooth-minimal-degree-hilbert-scheme}.
\end{proof}

\begin{remark}[Strong interpolation in positive characteristic]
	\label{remark:}
	While \autoref{corollary:strong-interpolation-for-varieties-of-minimal-degree}
	shows that varieties of minimal degree satisfy strong interpolation in characteristic
	$0$, we note that 
	the question of whether varieties of minimal degree satisfy strong interpolation in positive
characteristic is still an open problem. 

One particularly tantalizing example is that of the $2$-Veronese
surface in characteristic $2$.
It is shown in \cite[Corollary 7.2.9]{landesman:undergraduate-thesis}
that the normal bundle of the $2$-Veronese surface does not satisfy interpolation
in characteristic $2$. (See \cite[Definition A.5]{landesmanP:interpolation-problems:-del-pezzo-surfaces}
for a definition of normal bundle interpolation.)
However, it is unknown whether
the $2$-Veronese surface satisfies strong interpolation in characteristic $2$.
\end{remark}

\bibliographystyle{alpha}
\bibliography{/home/aaron/Dropbox/master}

\end{document}